\newtheorem{thm}{Theorem}[section]
\newtheorem{cor}[thm]{Corollary}
\newtheorem{lem}[thm]{Lemma}
\newtheorem{prop}[thm]{Proposition}
\newtheorem{defn}[thm]{Definition}
\newtheorem{rem}[thm]{Remark}
\numberwithin{equation}{section}
\DeclareMathOperator*{\supess}{sup\,ess}
	\author {Ahmed Saoudi $^{\orcidlink{0000-0003-4422-2054}}$ and Imen Kallel$^{\orcidlink{0000-0001-6581-3182}}$}
	\title{Continous linear canonical Dunkl wavelet transform: properties and applications}
	\date{}
\begin{document}
		\maketitle
		\begin{center}
		$^a$Department of Mathematics, College of Science, Northern Border University, Arar, Saudi Arabia.\\
		Corresponding author: ahmed.saoudi@ipeim.rnu.tn\\
		E-mail: imenkallel16@gmail.com
		\end{center}
			\begin{abstract}
					The aim of this paper is to establish  and study the linear canonical Dunkl wavelet transform. We begin by introducing the generalized translation operator and generalized convolution product for  the linear canonical Dunkl transform and we establish their basic properties. Next, we introduce the new proposed wavelet transform and we investigate its fundamentals properties. 
					  In the end,  we derive  some uncertainty inequalities for the desired wavelet transform as applications. \\ \\
						\textbf{ Keywords}. linear canonical Dunkl transform; linear canonical Dunkl translation; linear canonical Dunkl convolution, linear canonical Dunkl wavelet transform; uncertainty principles. 
					
					\textbf{Mathematics Subject Classification}. Primary 43AXX; Secondary 42AXX.
				\end{abstract}
		
		\section{Introduction}
		
		The linear canonical wavelet transform (LCWT) is a mathematical tool used for signal analysis and processing. It is a linear and invertible transformation that decomposes a signal into a set of wavelet coefficients, providing a time-frequency representation of the signal.
		
	   Wavelets are functions that are used to analyze and represent signals in both the time and frequency domains. They are characterized by their ability to capture localized features of a signal at different scales. Unlike traditional Fourier analysis, which uses fixed-frequency sinusoids, wavelets have variable shapes and sizes, making them suitable for analyzing signals with transient or localized features.
		
		The LCWT is derived from the canonical wavelet transform (CWT), which is a technique for decomposing a signal into wavelet coefficients at different scales. The CWT is defined by convolving the signal with scaled and translated versions of a mother wavelet function. However, the CWT is not computationally efficient, as it involves an infinite number of convolutions. The LCWT addresses this limitation by discretizing the CWT. It achieves this by sampling the mother wavelet function at specific scales and positions, resulting in a finite set of wavelet coefficients. These coefficients represent the energy distribution of the signal in the time-frequency plane.
		
	
			
		The LCWT has various applications in signal processing, such as image compression, denoising, feature extraction, and pattern recognition \cite{annaby2022regularized}. It provides a powerful tool for analyzing signals with both localized and frequency-varying characteristics, making it well-suited for many real-world applications \cite{shah2021linear}.

		Recently, the linear canonical wavelet transform was studied in various setting such as localization operators and wavelet multipliers \cite{catanua2023localization}, in quaternion domains \cite{shah2021linear}, in  composition on generalized function spaces \cite{prasad2020composition}, in reproducing formula in the  Hankel setting \cite{prasad2018canonical}, in free metaplectic domains \cite{shah2022free}, in special affine domains \cite{lonea2023special, shah2023special} and linear canonical Fourier-Bessel domains \cite{mohamed2024linear}. Our goal in this paper is to establish a new generalized wavelet transform in linear canonical domains and investigate its fundamentals properties and we derive some uncertainty principles for this new wavelet transform. This new generalized wavelet transform  generalizes a large class of a family of wavelet transforms such as detailed in below figure.
				\vspace{-0.5cm}
		\begin{figure}[h]
			\centering
			\includegraphics[width=1\textwidth]{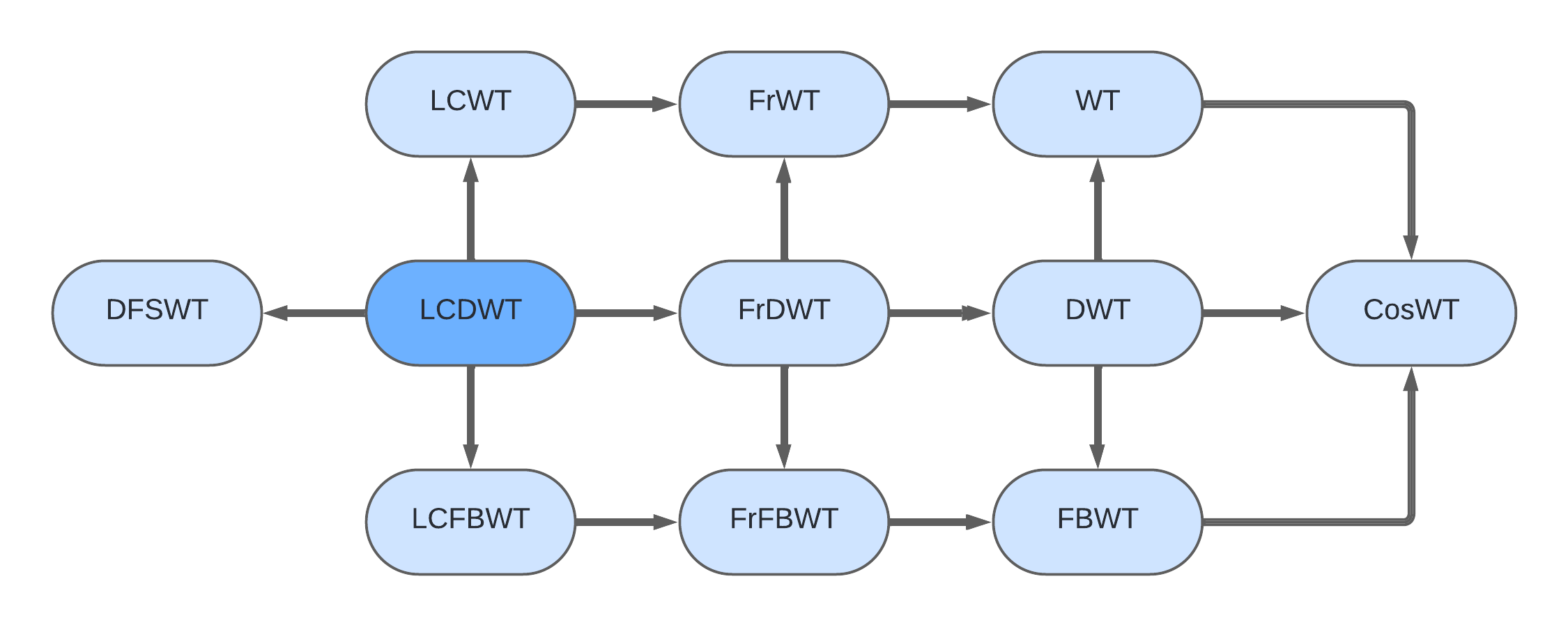}
			\caption{LCDWT generate a wide class of wavelet transforms.}
		\end{figure}
		\vspace{-0.5cm}
			We mention below the abbreviations used throughout above figure.\\~\\
		\textit{List of Abbreviations}\\
		LCDWT - linear canonical Dunkl wavelet transform\\
		DFSWT - Dunkl  Fresnel  wavelet transform\\
		LCWT - linear canonical wavelet transform\\
		LCFBWT - linear canonical Fourier--Bessel wavelet transform\\
		FrDWT - fractional Dunkl wavelet transform\\
		FrWT - fractional  wavelet transform\\
		FrFBWT - fractional Fourier--Bessel wavelet transform\\
		DWT -  Dunkl wavelet transform\\
		WT -  wavelet transform\\
		FBWT -  Fourier--Bessel wavelet transform\\
		CosWT - Fourier--cosine wavelet transform\\

		   The outline of this paper is as follows. In Sec.  \ref{sec2}, we give a brief  overview  for the linear canonical Dunkl transform. In Sec. \ref{sec3}, we  introduce the linear canonical Dunkl translation and convolution and we establish their basic properties. Section \ref{sec4} is devoted to define and study  the new proposed wavelet transform and  investigate its fundamentals properties. In the last section, we drive  some uncertainty inequalities for the desired wavelet transform as applications.

		\section{Overview on linear canonical Dunkl transform}\label{sec2}
		
%
		
		We are interested in this paper to the  one-dimensional canonical Dunkl transform $\mathcal{D}_{\mu}^{M}$, which generalizes a large class of a family of integral transforms,  see \cite{Ghazouani-unified17,HardyLCDT}.
		\begin{equation}\label{CDT}
			\mathcal{D}_{\mu}^{M} f(x)=\left\{\begin{array}{ll}
				\displaystyle\frac{1}{\Gamma(\mu+1)(2 i b)^{\mu+1}} \int_{\mathbb{R}}\mathcal{R}_{\mu}^{M}(x, y) f(y)|y|^{2 \mu+1} d y, & b \neq 0 \\\\
				\displaystyle\frac{e^{\frac{i}{2} \frac{c}{a} x^{2}}}{|a|^{\mu+1}} f(x / a), & b=0
			\end{array}\right.
		\end{equation}
		where $\mathcal{R}_{\mu}^{M}(x, y)$ is the  kernel defined by (see \cite[eq.3.30]{Ghazouani-unified17})
		\begin{equation}\label{kernelEmnu}
			\mathcal{R}_{\mu}^{M}(x, y)=e^{\frac{i}{2}\left(\frac{d}{b} x^{2}+\frac{a}{b} y^{2}\right)}\mathcal{R}_{\mu}(-i x / b, y)
		\end{equation}
		with $\mathcal{R}_{\mu}(x, y)$ is the Dunkl kernel of type $A_{2}$ given by (see \cite{Rosler2003})
	\begin{equation}\label{dkernel}
		\mathcal{R}_{\mu}(i x, y)=j_{\mu}(x y)+\frac{i x y}{2(\mu+1)} j_{\mu+1}(x y)
	\end{equation}
		and $j_{\mu}$ is the normalized spherical Bessel function
\begin{equation}\label{seriesb}
			j_{\mu}(x):=2^{\nu} \Gamma(\mu+1) \frac{J_{\mu}(x)}{x^{\mu}}=\Gamma(\mu+1) \sum_{n=0}^{\infty} \frac{(-1)^{n}(x / 2)^{2 n}}{n ! \Gamma(n+\mu+1)}.
\end{equation}
		Note that $J_{\mu}$ is the classical Bessel function (see \cite{Watson1944}).

Along this paper, we denote by  $L^{p}_{\mu}(\mathbb{R})$ the Lebesgue space of measurable functions on $\mathbb{R}$ such that
			$$
			\begin{array}{l}
				\|f\|_{\mu,p}=\left(\displaystyle\int_{\mathbb{R}}|f(y)|^{p} |y|^{2\mu+1} \mathrm{~d} y\right)^{\frac{1}{p}}<\infty, \quad \text { if } 1 \leq p<\infty \\
				\|f\|_{\mu,\infty}=\displaystyle\supess_{y\in\mathbb{R}}|f(y)|<\infty, \quad \text { if } p=\infty,
			\end{array}
			$$
			provided with the topology defined by the norm $\|\cdot\|_{\mu ,p}$
		and   $L^{2}_{\mu}(\mathbb{R})$ the Hilbert space equipped with the inner product $\langle., .\rangle_{\mu}$ given by
			$$
			\langle f, g\rangle_{\mu}=\int_{\mathbb{R}} f(y) \overline{g(y)} |y|^{2\mu+1} \mathrm{~d}y.
			$$
			
%

	 \begin{prop}\label{propLCDT}\cite{Ghazouani-unified17} 	Let $M$  be a matrix belongs to  $ S L(2, \mathbb{R})$.
	\begin{enumerate}
		\item  Suppose that $b \neq 0$. Then for all $f \in L_k^1\left(\mathbb{R}\right), \mathcal{D}_{\mu}^{M}f$ belongs to $\mathcal{C}_0\left(\mathbb{R}\right)$ and verifies
		$$
		\left\|\mathcal{D}_{\mu}^{M} f\right\|_{\mu,\infty} \leq \frac{1}{\Gamma(\mu+1)(2|b|)^{\mu+1}}\|f\|_{\mu,1}
		$$
		\item For all $f$ in $L^{1}_{\mu}(\mathbb{R})$ with $\mathcal{D}_{\mu}^{M} f \in L_{\mu}^{1}\left(\mathbb{R}\right)$,
	\begin{equation}\label{invlcdt}
		\left(\mathcal{D}_{\mu}^{M^{-1}} \circ \mathcal{D}_{\mu}^{M}\right) f=f \quad \text {, a.e., and }\quad \left(\mathcal{D}_{\mu}^{M}\circ \mathcal{D}_{\mu}^{M^{-1}}\right) f=f \text {, a.e. }
	\end{equation}
		\item The linear canonical Dunkl transform $\mathcal{D}_{\mu}^{M}$ is a one-to-one and onto mapping from $\mathcal{S}\left(\mathbb{R}\right)$ into itself. Moreover,
		 we have for all
		$f \in \mathcal{S}\left(\mathbb{R}\right)$
		$$
		\left(\mathcal{D}_{\mu}^{M}\right)^{-1} f=\mathcal{D}_{\mu}^{M^{-1}} f.
		$$
		\item Plancherel theorem: Let $f$ and $g$ be in $L^{1}_{\mu}(\mathbb{R})$ and $M \in S L(2, \mathbb{R})$ such that $b \neq 0$. Then
		$$
		\int_{\mathbb{R}} \mathcal{D}_{\mu}^{M}f(x) \overline{g(x)} |x|^{2\mu+1} d x=\int_{\mathbb{R}} f(x) \overline{\mathcal{D}_{\mu}^{M^{-1}} g(x)}|x|^{2\mu+1} d x.
		$$
		\item  Let $f \in \mathcal{S}\left(\mathbb{R}\right)$ and $M \in S L(2, \mathbb{R})$. Then
		$$
		\left\|\mathcal{D}_{\mu}^{M}f\right\|_{\mu,2}=\|f\|_{\mu,2}.
		$$
		\item Let $M \in S L(2, \mathbb{R})$.
		 If $f \in L^{1}_{\mu}(\mathbb{R}) \cap L^{2}_{\mu}(\mathbb{R})$, then $\mathcal{D}_{\mu}^{M}f \in L^{2}_{\mu}(\mathbb{R})$ and $\left\|\mathcal{D}_{\mu}^{M}f\right\|_{\mu,2}=\|f\|_{\mu,2}$.
		\item Let $M \in SL(2, \mathbb{R})$. There exists a unique unitary operator on $SL(2, \mathbb{R})$ which  coincides with $\mathcal{D}_{\mu}^{M}$ on $L^{1}_{\mu}(\mathbb{R}) \cap L^{2}_{\mu}(\mathbb{R})$, and we  denoted  this extension by $f \longrightarrow \mathcal{D}_{\mu}^{M}f$.
	\end{enumerate}
\end{prop}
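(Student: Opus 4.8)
The plan is to reduce every assertion to a classical property of the ordinary Dunkl transform by exhibiting $\mathcal{D}_\mu^M$, for $b\neq0$, as an honest composition of elementary isometries. Substituting the kernel \eqref{kernelEmnu} into \eqref{CDT} and factoring the normalizing constant as $(2ib)^{\mu+1}=(ib)^{\mu+1}2^{\mu+1}$ gives
\begin{equation*}
\mathcal{D}_\mu^M f(x)=\frac{e^{\frac{i}{2}\frac{d}{b}x^2}}{(ib)^{\mu+1}}\,\mathcal{D}_\mu\!\Big[e^{\frac{i}{2}\frac{a}{b}(\cdot)^2}f\Big]\!\big(x/b\big),
\end{equation*}
where $\mathcal{D}_\mu g(\xi)=\frac{1}{\Gamma(\mu+1)2^{\mu+1}}\int_{\mathbb{R}}\mathcal{R}_\mu(-i\xi,y)g(y)|y|^{2\mu+1}\,dy$ is the (unitary) Dunkl transform. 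Thus $\mathcal{D}_\mu^M$ is multiplication by the unimodular chirp $e^{\frac{i}{2}\frac{a}{b}(\cdot)^2}$, followed by $\mathcal{D}_\mu$, a dilation by $1/b$, and multiplication by a second chirp, the whole scaled by the factor $(ib)^{-(\mu+1)}$, whose modulus is $|b|^{-(\mu+1)}$. The degenerate case $b=0$ is itself a chirp times a dilation and is treated directly; note $\mathcal{D}_\mu^{I}=\mathrm{id}$.

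For (1) I would use the Dunkl-kernel bound $|\mathcal{R}_\mu(-ix/b,y)|\le1$ and unimodularity of the chirps to estimate the integral in \eqref{CDT} by $\|f\|_{\mu,1}$, collecting the prefactor $\tfrac{1}{\Gamma(\mu+1)(2|b|)^{\mu+1}}$; membership in $\mathcal{C}_0(\mathbb{R})$ then follows from the Riemann–Lebesgue property of $\mathcal{D}_\mu$, since dilation and multiplication by a continuous unimodular chirp preserve $\mathcal{C}_0(\mathbb{R})$. For (2) and (3) I would feed the classical Dunkl inversion theorem and the fact that $\mathcal{D}_\mu$ is a topological automorphism of $\mathcal{S}(\mathbb{R})$ through the factorization: the chirp and dilation factors attached to $M$ and to $M^{-1}$ are mutually inverse, so $\mathcal{D}_\mu^{M^{-1}}\circ\mathcal{D}_\mu^{M}=\mathcal{D}_\mu^{I}=\mathrm{id}$ and symmetrically, which is precisely \eqref{invlcdt}; bijectivity on $\mathcal{S}(\mathbb{R})$ and the identification of the inverse with $\mathcal{D}_\mu^{M^{-1}}$ come along for free.

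The isometry statements (4)–(6) rest on the fact that each factor preserves $\|\cdot\|_{\mu,2}$: multiplication by a unimodular function is trivially norm-preserving, $\|\mathcal{D}_\mu g\|_{\mu,2}=\|g\|_{\mu,2}$ is the Dunkl–Plancherel identity, and the substitution $\xi=x/b$ shows the Jacobian $|b|^{2\mu+2}$ of the weighted measure cancels exactly against $|(ib)^{-(\mu+1)}|^2=|b|^{-(2\mu+2)}$. Composing yields $\|\mathcal{D}_\mu^M f\|_{\mu,2}=\|f\|_{\mu,2}$ on $\mathcal{S}(\mathbb{R})$ (part (5)) and on $L^1_\mu\cap L^2_\mu$ (part (6)); the Parseval relation (4) follows either by polarization or directly by Fubini, transferring $\mathcal{R}_\mu^M$ onto $\overline{g}$ and recognizing the kernel of $\mathcal{D}_\mu^{M^{-1}}$, which is legitimate for $f,g\in L^1_\mu$.

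Finally, for (7) I would invoke density of $L^1_\mu(\mathbb{R})\cap L^2_\mu(\mathbb{R})$ in $L^2_\mu(\mathbb{R})$ together with the $L^2$-isometry from (6): the bounded linear transformation theorem supplies a unique continuous extension of $\mathcal{D}_\mu^M$ to all of $L^2_\mu(\mathbb{R})$, necessarily an isometry, and surjectivity (hence unitarity) holds because the analogous extension of $\mathcal{D}_\mu^{M^{-1}}$ is its two-sided inverse by (2). I expect the only genuine bookkeeping to be the constant-tracking just described, namely checking that $(2ib)^{\mu+1}$, the weight $|y|^{2\mu+1}$, and the substitution $y\mapsto by$ conspire to make the dilation exactly isometric while the unimodular phase $i^{\mu+1}$ cancels between $M$ and $M^{-1}$; once this is verified, everything else is a faithful transcription of standard Dunkl-transform facts.
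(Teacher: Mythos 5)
The paper gives no proof of this proposition at all: it is quoted verbatim from the cited reference \cite{Ghazouani-unified17}, so there is no in-paper argument to compare against. Your sketch is nevertheless correct and is the standard route: the factorization $\mathcal{D}_\mu^M f(x)=\frac{e^{\frac{id}{2b}x^2}}{(ib)^{\mu+1}}\,\mathcal{D}_\mu\bigl[e^{\frac{ia}{2b}(\cdot)^2}f\bigr](x/b)$ you start from is exactly the identity this paper itself invokes (without restating it) in the proofs of Section 4, and reducing each item to the kernel bound $|\mathcal{R}_\mu(-ix/b,y)|\le 1$, Dunkl inversion, Dunkl--Plancherel, the Jacobian cancellation $|b|^{2\mu+2}\cdot|b|^{-(2\mu+2)}=1$, and the bounded-extension theorem is how the cited source proceeds. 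The only place where your sketch leans on something not quite free is part (2): the cancellation of the inner and outer chirps between $M$ and $M^{-1}=(d,-b,-c,a)$ should be written out once, since it is this computation (rather than an abstract composition law $\mathcal{D}_\mu^{M'}\circ\mathcal{D}_\mu^{M}=\mathcal{D}_\mu^{M'M}$, which you do not get to assume) that turns the two Dunkl transforms into the classical inversion pair; once displayed, it is routine.
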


\section{Linear canonical Dunkl translation and convolution}\label{sec3}
 In order to develop the new proposed  wavelet transform  we shall introduce the linear canonical Dunkl translation and convolution and establish some of their basic properties which will be very fruitful for the development of desired wavelet transform.
\begin{defn}
Let $M=\left(  a,b,c,d\right)  \in SL(2,\mathbb{R})$ such that $b\neq0$ and $f\in\mathscr C(\mathbb{R})$. We define  the linear canonical Dunkl translation operator $\mathcal{T}_{x}^{\mu, M}\left(f\right)(y)$ for all $x,y\in\mathbb{R}$  by
	\begin{equation}\label{FJTransl}
		\mathcal{T}_{x}^{\mu, M}\left(f\right)(y)=\int_{\mathbb{R}} e^{-i\frac{d}{b}z^{2}}f(z) f(z) \mathcal{W}^M_{\mu}(x, y, z) |z|^{2\nu +1} dz,
	\end{equation}
	where
	\begin{equation}
		\mathcal{W}_{\mu}^{M}(x,y,z)=e^{\frac{i}{2}\frac{d}{b}%
			(x^{2}+y^{2}+z^{2})}\mathcal{W}_{\mu}(x,y,z) \label{4.2}%
	\end{equation}
	with  $\mathcal{W}_{\mu}(x, y, z)$ is a nonnegative, symmetric function in three variables, compactly supported in $z$ and for more details see \cite[Theorem 2.2]{Dunkl-transform-TBM}.
\end{defn}
It is easy to check by simple calculation the following properties for the linear canonical Dunkl translation operator.
\begin{lem}\label{propertiestransl} Let $M=\left(  a,b,c,d\right)  \in SL(2,\mathbb{R})$ such that $b\neq0$ and $f\in\mathscr C(\mathbb{R})$. The linear canonical Dunkl translation operator $\mathcal{T}_{x}^{\mu, M}$, verifies the following properties:
	\begin{enumerate}
		\item Identity: 	$\mathcal{T}_{0}^{\mu, M}\left(f\right)(y)=f(y).$
		\item Symmetry:  	$\mathcal{T}_{x}^{\mu, M}\left(f\right)(y)= \mathcal{T}_{y}^{\mu, M}\left(f\right)(x).$
		\item  Linearity: $\mathcal{T}_{x}^{\mu, M}\left(cf +g\right)(y)= c\mathcal{T}_{x}^{\mu, M}\left(f\right)(y)+ \mathcal{T}_{x}^{\mu, M}\left(g \right)(y).$
		\item  Product formula: $\mathcal{T}_{x}^{\mu, M}\left(\mathcal{R}_{\mu}^{M}(\cdot, y)\right)(z)=e^{-\frac{i}{2}\frac{a}{b}y^{2}}\mathcal{R}_{\mu}^{M}(x, y)\mathcal{R}_{\mu}^{M}(z, y)$.
		\item Continuity: for all  $f\in L^{p}_{\mu}(\mathbb{R})$, $1\leq p\leq \infty$ and $x\in\mathbb{R},$ the function $\mathcal{T}_{x}^{\mu, M}f$ is defined
		almost everywhere on $\mathbb{R}$, belongs to $ L^{p}_{\mu}(\mathbb{R})$ and we have  $\|\mathcal{T}_{x}^{\mu, M}\left(f\right)\|_{p, \mu}\leq \|f\|_{\mu ,p}$.
	\end{enumerate}
\end{lem}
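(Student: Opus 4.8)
The plan is to dispose of the five items in increasing order of difficulty, after first exposing the classical Dunkl translation $\tau_x^\mu$ that sits inside $\mathcal{T}_x^{\mu,M}$. Substituting \eqref{4.2} into \eqref{FJTransl} and collecting the quadratic phases, I rewrite
\[
\mathcal{T}_{x}^{\mu, M}(f)(y) = e^{\frac{i}{2}\frac{d}{b}(x^2+y^2)} \int_{\mathbb{R}} e^{-\frac{i}{2}\frac{d}{b}z^2}\, f(z)\, \mathcal{W}_\mu(x, y, z)\, |z|^{2\mu+1}\, dz,
\]
which exhibits $\mathcal{T}_x^{\mu,M}$ as a unimodular modulation of the classical Dunkl translation applied to $z \mapsto e^{-\frac{i}{2}\frac{d}{b}z^2}f(z)$. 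Items (1)--(3) then follow at once: linearity (3) from linearity of the integral; symmetry (2) from the symmetry of $\mathcal{W}_\mu$ in its first two arguments together with the symmetric prefactor; and the identity (1) by setting $x=0$ and using the reproducing normalization $\int_{\mathbb{R}} g(z)\,\mathcal{W}_\mu(0,y,z)\,|z|^{2\mu+1}\,dz = g(y)$ of the classical kernel, the two surviving exponentials cancelling.

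The substance of the lemma is the product formula (4). Evaluating at $z$ with a fresh dummy variable $t$ and taking $f=\mathcal{R}_\mu^M(\cdot,y)$, I substitute \eqref{kernelEmnu} and expect the three quadratic phases in $t$ --- the factor $e^{-i\frac{d}{b}t^2}$ from \eqref{FJTransl}, the factor $e^{\frac{i}{2}\frac{d}{b}t^2}$ from the kernel, and the factor $e^{\frac{i}{2}\frac{d}{b}t^2}$ from $\mathcal{W}_\mu^M$ --- to cancel identically, leaving
\[
\mathcal{T}_x^{\mu,M}(\mathcal{R}_\mu^M(\cdot,y))(z) = e^{\frac{i}{2}\frac{a}{b}y^2}\, e^{\frac{i}{2}\frac{d}{b}(x^2+z^2)} \int_{\mathbb{R}} \mathcal{R}_\mu(-it/b, y)\, \mathcal{W}_\mu(x,z,t)\, |t|^{2\mu+1}\, dt.
\]
The key move is to read off from \eqref{dkernel} the kernel symmetry $\mathcal{R}_\mu(-it/b,y)=\mathcal{R}_\mu(-iy/b,t)$, so that the integrand is $\mathcal{R}_\mu(i\lambda,t)$ with $\lambda=-y/b$; the classical Dunkl product formula then evaluates the integral to $\mathcal{R}_\mu(-iy/b,x)\,\mathcal{R}_\mu(-iy/b,z)$. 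A second use of the symmetry rewrites this as $\mathcal{R}_\mu(-ix/b,y)\,\mathcal{R}_\mu(-iz/b,y)$, and reassembling the phases through \eqref{kernelEmnu} converts the prefactor $e^{\frac{i}{2}\frac{a}{b}y^2}$ into $e^{-\frac{i}{2}\frac{a}{b}y^2}$, yielding exactly $e^{-\frac{i}{2}\frac{a}{b}y^2}\mathcal{R}_\mu^M(x,y)\mathcal{R}_\mu^M(z,y)$.

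For the contractivity (5) I would argue pointwise: since every exponential factor in \eqref{FJTransl}--\eqref{4.2} is unimodular and $\mathcal{W}_\mu\ge 0$, one has $|\mathcal{T}_x^{\mu,M}(f)(y)| \le \int_{\mathbb{R}} |f(z)|\,\mathcal{W}_\mu(x,y,z)\,|z|^{2\mu+1}\,dz = \tau_x^\mu(|f|)(y)$. The bound $\|\mathcal{T}_x^{\mu,M}f\|_{\mu,p}\le\|f\|_{\mu,p}$ then follows from the $L^p_\mu$-contractivity of the positive classical Dunkl translation, obtained for $p=\infty$ from the total mass of $\mathcal{W}_\mu$ and for $1\le p<\infty$ from Minkowski's integral inequality applied to the kernel $\mathcal{W}_\mu(x,y,\cdot)$. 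The only real obstacle I foresee is the bookkeeping in (4): tracking the quadratic phases so that they cancel cleanly in $t$ while the $y$- and $(x,z)$-phases recombine correctly, and applying the kernel symmetry in the right direction at both ends of the product-formula step. Once $\mathcal{T}_x^{\mu,M}$ is seen as a modulation-conjugated copy of $\tau_x^\mu$, the remaining items are routine.
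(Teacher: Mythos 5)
The paper gives no proof of this lemma at all (it is dismissed with ``it is easy to check by simple calculation''), so there is nothing to compare against step by step; your argument is correct and is evidently the intended calculation. Your reduction of $\mathcal{T}_x^{\mu,M}$ to a quadratic-phase modulation of the classical Dunkl translation applied to $e^{-\frac{i}{2}\frac{d}{b}z^2}f(z)$ is exactly the structure the paper exploits later (cf.\ the remark rewriting $\psi^M_{t,x}$ via $\mathcal{T}_x^{\mu}$), the phase bookkeeping in the product formula checks out, and item (5) follows as you say from the paper's stated nonnegativity and unit total mass of $\mathcal{W}_\mu(x,y,\cdot)\,|z|^{2\mu+1}dz$ (the only point to keep in mind being that the identity $\mathcal{T}_0^{\mu,M}f=f$ rests on the degenerate limit of the kernel representation at $x=0$ rather than on a literal formula for $\mathcal{W}_\mu(0,y,z)$).
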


Now, using  the above proposed translation operator we shall define the linear canonical Dunkl convolution product and we give for it some properties.
\begin{defn} Let $M \in SL(2,\mathbb{R})$ such that $b\neq0$ and let $f$ and $g$ be measurable functions on $\mathbb{R}$.  We define  the linear canonical Dunkl convolution product of $f$ and $g$ by
	\begin{eqnarray}
		f\star g(x)=\int_{\mathbb{R}}\left[\mathcal{T}_{0}^{\mu, M}f\right](y) \ \left[e^{-i\frac{d}{b}y^{2}}g(y)\right]|y|^{2\mu +1}dy
		\label{convo}
	\end{eqnarray}
	for all $x$ such that the integral exists.
\end{defn}
\begin{lem} Assume that all integrals in question exist, then we have  the elementary properties of convolutions product. 
	\begin{enumerate}
		\item   $f \star g =g \star f .$ 
		\item 	$\mathcal{T}_{x}^{\mu, M}\left(f \star g \right)=\left[\mathcal{T}_{x}^{\mu, M}f \right]\star g =f \star \left[\mathcal{T}_{x}^{\mu, M}g \right].$
	\end{enumerate}
\end{lem}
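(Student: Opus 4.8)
The plan is to reduce both assertions to two structural facts about the translation $\mathcal{T}_x^{\mu,M}$: the symmetry of the kernel $\mathcal{W}_\mu$ and the commutation of iterated translations, applying Fubini's theorem freely since all integrals are assumed to exist. Starting with the commutativity $f\star g=g\star f$, I would unfold the definition \eqref{convo} and substitute the integral expression of the translation from \eqref{FJTransl}, obtaining a double integral over two dummy variables $y,z$ whose integrand carries the factors $e^{-i\frac{d}{b}y^{2}}g(y)$, $e^{-i\frac{d}{b}z^{2}}f(z)$ and the kernel $\mathcal{W}_\mu^M(x,y,z)$. Using \eqref{4.2} to split off the quadratic phase $e^{\frac{i}{2}\frac{d}{b}(x^{2}+y^{2}+z^{2})}$ and collecting the $y$- and $z$-exponentials, the expression becomes $e^{\frac{i}{2}\frac{d}{b}x^{2}}$ times a double integral whose integrand is manifestly invariant under the exchange $(f,y)\leftrightarrow(g,z)$, precisely because $\mathcal{W}_\mu(x,y,z)$ is symmetric in its variables. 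Relabelling $y\leftrightarrow z$ and invoking Fubini then gives $f\star g(x)=g\star f(x)$.

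For the first identity of item (2), I would apply $\mathcal{T}_x^{\mu,M}$ to $f\star g$ and insert the definition of the convolution inside the translation integral. After exchanging the order of integration, the inner integral over the translation variable $w$ can be recognised: rewriting $\mathcal{T}_w^{\mu,M}f(y)=\mathcal{T}_y^{\mu,M}f(w)$ by the symmetry property of Lemma~\ref{propertiestransl} turns it into the iterated translation $\mathcal{T}_x^{\mu,M}\bigl[\mathcal{T}_y^{\mu,M}f\bigr]$ evaluated at the outer point. On the other side, unfolding the target $\bigl[\mathcal{T}_x^{\mu,M}f\bigr]\star g$ and applying the symmetry property once more produces $\mathcal{T}_y^{\mu,M}\bigl[\mathcal{T}_x^{\mu,M}f\bigr]$ against the same weight $e^{-i\frac{d}{b}y^{2}}g(y)|y|^{2\mu+1}$. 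Hence the whole identity collapses to the pointwise commutation relation $\mathcal{T}_x^{\mu,M}\mathcal{T}_y^{\mu,M}f=\mathcal{T}_y^{\mu,M}\mathcal{T}_x^{\mu,M}f$.

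This commutation is the crux and the step I expect to be the main obstacle. Writing both sides as triple integrals against the product $\mathcal{W}_\mu^M(x,\cdot,\cdot)\,\mathcal{W}_\mu^M(y,\cdot,\cdot)$ of kernels, the equality requires more than the bare symmetry of $\mathcal{W}_\mu$: it rests on the associativity of the underlying Dunkl product kernel, which I would draw from the structure supplied by \cite[Theorem 2.2]{Dunkl-transform-TBM}. All the exponential bookkeeping is routine once the quadratic phase $e^{\frac{i}{2}\frac{d}{b}(\cdots)}$ is factored out, but this associativity is a genuinely nontrivial input. Once it is in hand, the second identity of item (2) is immediate by combining commutativity with the first identity, namely $\mathcal{T}_x^{\mu,M}(f\star g)=\mathcal{T}_x^{\mu,M}(g\star f)=\bigl[\mathcal{T}_x^{\mu,M}g\bigr]\star f=f\star\bigl[\mathcal{T}_x^{\mu,M}g\bigr]$.
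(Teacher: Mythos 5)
Your argument is sound, but note that the paper offers no proof of this lemma at all --- it is stated as an ``elementary property'' and left to the reader --- so there is nothing to compare against except the correctness of your own reasoning. For item (1) your computation is exactly right: after substituting \eqref{FJTransl} into \eqref{convo} and splitting off the quadratic phase via \eqref{4.2}, the double integrand is symmetric under exchanging the roles of $f$ and $g$ together with the two dummy variables, because $\mathcal{W}_{\mu}$ is symmetric; Fubini finishes it. For item (2) your reduction to the commutation $\mathcal{T}_{x}^{\mu,M}\mathcal{T}_{y}^{\mu,M}=\mathcal{T}_{y}^{\mu,M}\mathcal{T}_{x}^{\mu,M}$ is correct, and you rightly flag this as the only nontrivial input. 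One simplification worth making: rather than invoking associativity of the product kernel at the level of triple integrals, you can get the commutation (and indeed the whole of item (2)) directly from the multiplicativity identity \eqref{dunkoftransl}, $\mathcal{D}_{\mu}(\mathcal{T}_{x}^{\mu}h)(\xi)=\mathcal{R}_{\mu}(i\xi,x)\mathcal{D}_{\mu}(h)(\xi)$, which the paper already uses: applying the Dunkl transform turns both iterated translations into multiplication by the symmetric product $\mathcal{R}_{\mu}(i\xi,x)\mathcal{R}_{\mu}(i\xi,y)$, and injectivity of $\mathcal{D}_{\mu}$ gives the commutation; the canonical phases are handled exactly by the bookkeeping you describe. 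Either way the proof is complete, and your final deduction of $f\star[\mathcal{T}_{x}^{\mu,M}g]$ from commutativity plus the first identity is correct.
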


\begin{prop} (\textbf{Young's type Inequality}) Let $M \in SL(2,\mathbb{R})$ such that $b\neq0$, $1 \leq p, q, r \leq \infty$ satisfy $\frac{1}{p}+\frac{1}{q}-1=\frac{1}{r}$,  $f  \in L^{p}_{\mu}(\mathbb{R})$ and $g  \in L^{q}_{\mu}(\mathbb{R})$. Then the linear canonical Dunkl convolution  product $f \star g $ belongs to $\in L^{r}_{\mu}(\mathbb{R})$, and we have
	$$
	\left\|f \star g \right\|_{\mu , r}^{2} \leq C\|f \|_{\mu ,p}^{2}\|g \|_{\mu ,q}^{2}.
	$$
	\label{young1}
\end{prop}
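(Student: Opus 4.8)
The plan is to establish the sharp, unsquared Young inequality $\|f\star g\|_{\mu,r}\le \|f\|_{\mu,p}\|g\|_{\mu,q}$, from which the stated estimate follows immediately (with $C=1$) after squaring both sides. First I would take absolute values inside the defining integral \eqref{convo}: since the phase factor $e^{-i\frac{d}{b}y^{2}}$ has modulus one it disappears, and reading the translation in the convolution as $\mathcal{T}_{x}^{\mu,M}f$ this gives the pointwise bound
$$
|f\star g(x)|\le \int_{\mathbb{R}}\bigl|\mathcal{T}_{x}^{\mu,M}f(y)\bigr|\,|g(y)|\,|y|^{2\mu+1}\,dy .
$$
I would treat the interior case $1<p,q,r<\infty$ first and handle the degenerate exponents separately.

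The core of the argument is a three–exponent Hölder factorization. I would write
$$
\bigl|\mathcal{T}_{x}^{\mu,M}f(y)\bigr|\,|g(y)|
=\Bigl(\bigl|\mathcal{T}_{x}^{\mu,M}f(y)\bigr|^{p}|g(y)|^{q}\Bigr)^{1/r}\,
\bigl|\mathcal{T}_{x}^{\mu,M}f(y)\bigr|^{1-\frac{p}{r}}\,
|g(y)|^{1-\frac{q}{r}},
$$
and apply the generalized Hölder inequality for the weighted measure $|y|^{2\mu+1}\,dy$ with the three exponents $r$, $s_{1}=\frac{pr}{r-p}$ and $s_{2}=\frac{qr}{r-q}$. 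The key bookkeeping is the identity $\frac{1}{r}+\frac{1}{s_{1}}+\frac{1}{s_{2}}=\frac{1}{p}+\frac{1}{q}-\frac{1}{r}=1$, which is precisely the hypothesis $\frac1p+\frac1q-1=\frac1r$ rearranged. This yields
$$
|f\star g(x)|\le
\Bigl(\int_{\mathbb{R}}\bigl|\mathcal{T}_{x}^{\mu,M}f(y)\bigr|^{p}|g(y)|^{q}\,|y|^{2\mu+1}dy\Bigr)^{1/r}
\bigl\|\mathcal{T}_{x}^{\mu,M}f\bigr\|_{\mu,p}^{1-\frac{p}{r}}\,\|g\|_{\mu,q}^{1-\frac{q}{r}},
$$
and invoking the contractivity $\|\mathcal{T}_{x}^{\mu,M}f\|_{\mu,p}\le\|f\|_{\mu,p}$ from Lemma~\ref{propertiestransl}(5) controls the middle factor by $\|f\|_{\mu,p}^{1-p/r}$.

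Finally I would raise the last inequality to the power $r$, integrate in $x$ against $|x|^{2\mu+1}\,dx$, and use Fubini (legitimate because $\mathcal{W}_\mu$ is nonnegative and compactly supported in $z$, so all integrands are measurable and the iterated integrals are finite). The crucial simplification comes from the symmetry $\mathcal{T}_{x}^{\mu,M}f(y)=\mathcal{T}_{y}^{\mu,M}f(x)$ of Lemma~\ref{propertiestransl}(2): integrating the inner factor in $x$ gives $\int_{\mathbb{R}}|\mathcal{T}_{x}^{\mu,M}f(y)|^{p}|x|^{2\mu+1}dx=\|\mathcal{T}_{y}^{\mu,M}f\|_{\mu,p}^{p}\le\|f\|_{\mu,p}^{p}$, again by contractivity. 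Collecting the exponents then produces $\|f\star g\|_{\mu,r}^{r}\le\|f\|_{\mu,p}^{r}\|g\|_{\mu,q}^{r}$, i.e.\ the sharp inequality. I expect the main obstacle to be not the interior estimate but the careful treatment of the endpoint/degenerate cases — $r=\infty$ (where $p,q$ are conjugate and the bound reduces to a single application of Hölder), and $p=1$ or $q=1$ (where $s_1$ or $s_2$ becomes infinite and one of the middle factors disappears) — each of which must be argued directly, together with verifying the measurability and Fubini hypotheses that justify interchanging the order of integration.
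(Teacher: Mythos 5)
Your proposal follows essentially the same route as the paper's own proof: the three-exponent H\"older factorization with exponents $r$, $\frac{pr}{r-p}$, $\frac{qr}{r-q}$, the contractivity $\|\mathcal{T}_{x}^{\mu,M}f\|_{\mu,p}\le\|f\|_{\mu,p}$ from Lemma~\ref{propertiestransl}, and the final Fubini step in $x$ yielding $\|f\star g\|_{\mu,r}^{r}\le\|f\|_{\mu,p}^{r}\|g\|_{\mu,q}^{r}$. Your explicit attention to the degenerate exponents and to the symmetry property justifying the inner integral in $x$ is actually more careful than the paper's treatment.
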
	
\begin{proof} Let $M \in SL(2,\mathbb{R})$ such that $b\neq0$, $1 \leq p, q, r \leq \infty$ satisfy $\frac{1}{p}+\frac{1}{q}-1=\frac{1}{r}$,  $f  \in L^{p}_{\mu}(\mathbb{R})$ and $g  \in L^{q}_{\mu}(\mathbb{R})$. We apply the  inequality of H\"{o}lder to following product, we get
	$$
	\left|\mathcal{T}_{x}^{\mu, M}f(y)e^{-i\frac{d}{b}y^{2}}g(y)\right|=\left(\left|\mathcal{T}_{x}^{\mu, M}f(y)\right|^{p}|g(y)|^{q}\right)^{1/r}\left(\left|\mathcal{T}_{x}^{\mu, M}\right|^{p}\right)^{1/p-1/r}
	\left(|g(y)|^{q}\right)^{1/q-1/r},
	$$
	we have
	\begin{eqnarray*}
		\int_{\mathbb{R}}\left|\mathcal{T}_{x}^{\mu, M}f(y)e^{-i\frac{d}{b}y^{2}}g(y)\right||y|^{2\mu +1}dy&\leq&\left(\int_{\mathbb{R}}\left|\mathcal{T}_{x}^{\mu, M}f(y)\right|^{p}|g(y)|^{q}|y|^{2\mu +1}dy\right)^{\frac{1}{r}}\\
		&\times&\left(\int_{\mathbb{R}}\left|\mathcal{T}_{x}^{\mu, M}f(y)\right|^{p}|y|^{2\mu +1}dy\right)^{\frac{r-p}{rp}}\left(\int_{\mathbb{R}}|g(y)|^{q}|y|^{2\mu +1}dy\right)^{\frac{r-q}{rq}}.
	\end{eqnarray*}
	Which leads to
	\begin{eqnarray*}
		\left|f \star g(x)\right|^{r}&\leq&\left\|\mathcal{T}_{x}^{\mu, M}f\right\|_{\mu ,p} ^{r-p} \left\|g\right\|_{\mu , q} ^{r-q} \int_{\mathbb{R}}\left|\mathcal{T}_{x}^{\mu, M}f(y)\right|^{p}|g(y)|^{q}|y|^{2\mu +1}dy\\
	\end{eqnarray*}
	and according to the property $5$ of  Lemma \refeq{propertiestransl} we get 
	\begin{eqnarray*}
		\left|f \star g(x)\right|^{r}&\leq& \left\|f\right\|_{\mu ,p} ^{r-p} \left\|g\right\|_{\mu ,q} ^{r-q} \int_{\mathbb{R}}\left|\mathcal{T}_{x}^{\mu, M}f(y)\right|^{p}|g(y)|^{q}|y|^{2\mu +1}dy.
	\end{eqnarray*}
	Multiplying the above inequality by $x^{2\mu+1}$ and integrating on $\mathbb{R},$ we obtain
	\begin{eqnarray*}
		\left\|f \star g\right\|_{\mu ,r}^{r}&\leq&  \left\|f\right\|_{\mu ,p} ^{r-p} \left\|g\right\|_{\mu ,q} ^{r-q} \int_{\mathbb{R}} \left[\int_{\mathbb{R}}\left|\mathcal{T}_{x}^{\mu, M}f(y)\right|^{p}|g(y)|^{q}|y|^{2\mu +1}dy\right]|x|^{2\mu+1} dx\\
		&=&\left\|f\right\|_{\mu ,p} ^{r-p} \left\|g\right\|_{\mu ,q} ^{r-q} \int_{\mathbb{R}} |g(y)|^{q}|y|^{2\mu +1}\left[\int_{\mathbb{R}}\left|\mathcal{T}_{x}^{\mu, M}f(x)\right|^{p}|x|^{2\mu+1} dx\right]\\
		&\leq&\left\|f\right\|_{\mu ,p} ^{r} \left\|g\right\|_{\mu ,q} ^{r}.
	\end{eqnarray*}
\end{proof}

\section{Linear canonical Dunkl wavelet transform}\label{sec4}
In this section, we will introduce  the continuous   linear canonical Dunkl wavelet transform and we develop  its fundamental properties. 
\begin{defn}	
	A mother wavelet $\psi$ in $L^{2}_{\mu}(\mathbb{R})$ associated to the linear canonical Dunkl wavelet transform is
	said to be admissible if and only if it satisfies:
	\begin{equation}
		0<C_{\psi}^{M}:=|b|^{2\mu+2}\int_{0}^{\infty}\left|\mathcal{D}_{\mu}^{M}(f)\left(e^{-\frac{ia}{2b}z^{2}}\psi\right)(\lambda\xi)\right|^{2}\frac{d\xi}{\xi}<\infty\label{6.1}%
	\end{equation}
	where $C_{\psi}^{M}$ is called the admissibility condition of the linear canonical Dunkl wavelet transform.
\end{defn}
\begin{defn}
Let us define the family of the linear canonical Dunkl mother wavelets as follows:
\begin{equation}
	\forall y\in\mathbb{R},\quad\psi_{t,x}^{M}\left(  y\right)  =e^{-\frac
		{id}{2b}(x^{2}+y^{2})}e^{-\frac{ia}{2b}(y^{2}-x^{2})}\mathcal{T}_{x}^{\mu, M}(e^{\frac{id}{2b}z^{2}}\delta_{t}\psi)\left(  y\right)  .
	\label{6.2}%
\end{equation}
where $\delta_{t}$ is the dilation operator  defined for every measurable function $\psi$ on $\mathbb{R}$ and for all $t>0$ by 
\begin{equation}
\forall x\in\mathbb{R},\quad\delta_{t}\psi\left(  x\right)  =\frac{1}%
	{t^{\mu+1}}\psi\left(  \frac{x}{t}\right)  . \label{2.3}%
\end{equation}
\end{defn}
\begin{rem}
The proposed family of the linear canonical Dunkl mother wavelets can be rewrite in terms of the Dunkl translation operator and  the dilation operator $\delta_{t}$ as follows
\begin{equation}
	\forall y\in\mathbb{R},\quad\psi_{t,x}^{M}\left(  y\right)  =e^{-\frac
		{ia}{2b}(y^{2}-x^{2})}\mathcal{T}_{x}^{\mu}(\delta_{t}\psi)\left(  y\right)
	=e^{-\frac{ia}{2b}(y^{2}-x^{2})}\psi_{t,x}\left(  y\right)  \label{6.3}%
\end{equation}
where
\[
\forall y\in\mathbb{R},\quad\psi_{t,x}\left(  y\right)  =\mathcal{T}_{x}^{\mu}\delta_{t}
\psi\left(  y\right),
\]
and 
	\begin{equation}\label{Dunkltransl}
	\mathcal{T}_{x}^{\mu}\left(f\right)(y)=\int_{\mathbb{R}} f(z) f(z) \mathcal{W}_{\mu}(x, y, z) |z|^{2\mu +1}dz.
\end{equation}
\end{rem}

\begin{lem}
	Let $\psi\in L^{2}_{\mu}(\mathbb{R})$ be a linear canonical Dunkl mother wavelet. Then for all $\lambda\in\mathbb{R},$ we have
	\begin{equation}
		\mathcal{D}_{\mu}^{M}\left(  \psi_{t,x}^{M}\right)
		\left(  \lambda\right)  =\left\vert t\right\vert ^{\mu+1}e^{\frac{id}%
			{2b}\lambda^{2}}e^{\frac{ia}{2b}x^{2}}e^{-\frac{id}{2b}(t\lambda)^{2}}\mathcal{R}_{\mu}(i\lambda, x)\mathcal{D}_{\mu}^{M}\left[
		e^{-\frac{ia}{2b}z^{2}}\psi\right]  \left(  t\lambda\right)  . \label{6.4}%
	\end{equation}
	
\end{lem}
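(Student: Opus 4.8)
The plan is to unwind the definition \eqref{CDT} of $\mathcal{D}_\mu^M$ directly on $\psi_{t,x}^M$, using the product representation \eqref{6.3} of the mother wavelet together with the symmetry and product formula of the ordinary Dunkl translation $\mathcal{T}_x^\mu$, and finally to recognize the remaining integral as a linear canonical Dunkl transform of the chirped wavelet $e^{-\frac{ia}{2b}z^2}\psi$ at the point $t\lambda$. First I would write, for $b\neq0$,
\[
\mathcal{D}_\mu^M(\psi_{t,x}^M)(\lambda)=\frac{1}{\Gamma(\mu+1)(2ib)^{\mu+1}}\int_{\mathbb{R}}\mathcal{R}_\mu^M(\lambda,y)\,\psi_{t,x}^M(y)\,|y|^{2\mu+1}\,dy,
\]
and substitute $\psi_{t,x}^M(y)=e^{-\frac{ia}{2b}(y^2-x^2)}\mathcal{T}_x^\mu(\delta_t\psi)(y)$ from \eqref{6.3} together with the kernel \eqref{kernelEmnu}, namely $\mathcal{R}_\mu^M(\lambda,y)=e^{\frac{i}{2}\left(\frac{d}{b}\lambda^2+\frac{a}{b}y^2\right)}\mathcal{R}_\mu(-i\lambda/b,y)$. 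The key observation at this stage is that the Gaussian factor $e^{\frac{ia}{2b}y^2}$ carried by the kernel cancels exactly against the factor $e^{-\frac{ia}{2b}y^2}$ carried by the wavelet, so that only the pure Dunkl kernel $\mathcal{R}_\mu(-i\lambda/b,y)$ survives under the integral, multiplied by the global phase $e^{\frac{id}{2b}\lambda^2}e^{\frac{ia}{2b}x^2}$.

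Next I would isolate the integral $I:=\int_{\mathbb{R}}\mathcal{R}_\mu(-i\lambda/b,y)\,\mathcal{T}_x^\mu(\delta_t\psi)(y)\,|y|^{2\mu+1}\,dy$. Because the kernel $\mathcal{W}_\mu(x,y,z)$ of \eqref{Dunkltransl} is symmetric in its variables, the translation can be transferred onto the Dunkl kernel in the pairing against $|y|^{2\mu+1}\,dy$; applying then the product formula $\mathcal{T}_x^\mu\big(\mathcal{R}_\mu(-i\lambda/b,\cdot)\big)(y)=\mathcal{R}_\mu(-i\lambda/b,x)\,\mathcal{R}_\mu(-i\lambda/b,y)$ factors the constant $\mathcal{R}_\mu(-i\lambda/b,x)$ out of the integral, which is precisely the Dunkl-kernel factor appearing in \eqref{6.4}, and leaves $\int_{\mathbb{R}}\mathcal{R}_\mu(-i\lambda/b,y)\,\delta_t\psi(y)\,|y|^{2\mu+1}\,dy$. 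I would then treat the dilation by the substitution $u=y/t$ together with the homogeneity $\mathcal{R}_\mu(-i\lambda/b,tu)=\mathcal{R}_\mu(-it\lambda/b,u)$ of \eqref{dkernel} (the Dunkl kernel depends only on the product of its arguments), which, using $t>0$, produces the factor $|t|^{\mu+1}$ and replaces the kernel argument $\lambda/b$ by $t\lambda/b$, giving $I=|t|^{\mu+1}\mathcal{R}_\mu(-i\lambda/b,x)\int_{\mathbb{R}}\mathcal{R}_\mu(-it\lambda/b,u)\,\psi(u)\,|u|^{2\mu+1}\,du$.

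Finally I would recognize the surviving integral $\int_{\mathbb{R}}\mathcal{R}_\mu(-it\lambda/b,u)\,\psi(u)\,|u|^{2\mu+1}\,du$ as a linear canonical Dunkl transform: running the chirp cancellation of the first step in reverse (the factor $e^{\frac{ia}{2b}u^2}$ in $\mathcal{R}_\mu^M(t\lambda,u)$ absorbs the chirp $e^{-\frac{ia}{2b}u^2}$), this integral equals $\Gamma(\mu+1)(2ib)^{\mu+1}e^{-\frac{id}{2b}(t\lambda)^2}\mathcal{D}_\mu^M\big[e^{-\frac{ia}{2b}z^2}\psi\big](t\lambda)$ by \eqref{CDT} evaluated at $t\lambda$. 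Substituting this back into $I$ and then into $\mathcal{D}_\mu^M(\psi_{t,x}^M)(\lambda)$, the normalizations $\Gamma(\mu+1)(2ib)^{\mu+1}$ cancel and collecting the three phases $e^{\frac{id}{2b}\lambda^2}$, $e^{\frac{ia}{2b}x^2}$, $e^{-\frac{id}{2b}(t\lambda)^2}$ produces exactly \eqref{6.4}. The main obstacle is purely a bookkeeping one, namely keeping track of the several Gaussian chirp factors so that the $y^2$- and $u^2$-phases cancel at the right moments; the only genuinely structural inputs are the symmetry and product formula of the ordinary Dunkl translation and the scaling homogeneity of the Dunkl kernel.
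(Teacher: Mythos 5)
Your argument is correct and is essentially the paper's own proof carried out at the level of integrals: the paper factors $\mathcal{D}_{\mu}^{M}$ through the ordinary Dunkl transform via the chirp relation and then quotes the identities $\mathcal{D}_{\mu}(\mathcal{T}_{x}^{\mu}f)(\xi)=\mathcal{R}_{\mu}(i\xi,x)\,\mathcal{D}_{\mu}f(\xi)$ and $\mathcal{D}_{\mu}(\delta_{t}\psi)=\delta_{1/t}\mathcal{D}_{\mu}\psi$, which are exactly what you re-derive inline from the symmetry of $\mathcal{W}_{\mu}$, the product formula for the translated kernel, and the substitution $u=y/t$. The only (harmless) discrepancy is that your bookkeeping yields the factor $\mathcal{R}_{\mu}(-i\lambda/b,x)$ where \eqref{6.4} prints $\mathcal{R}_{\mu}(i\lambda,x)$; this is an inconsistency in the paper's own statement (its subsequent proposition writes $\mathcal{R}_{\mu}(i\lambda/b,x)$), not a flaw in your derivation.
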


\begin{proof}
	Let $\psi\in L^{2}_{\mu}(\mathbb{R})$ be a linear canonical Dunkl mother wavelet, then we have for all $\lambda\in\mathbb{R}$
	\begin{equation*}
		\mathcal{D}_{\mu}^{M}\left(  \psi_{t,x}^{M}\right)
		\left(  \lambda\right)  =\frac{e^{\frac{id}{2b}\lambda^{2}}}%
		{(ib)^{\mu+1}}\mathcal{D}_{\mu}\left[  e^{\frac{ia}{2b}z^{2}}\psi
		_{t,x}^{M}\left(  z\right)  \right]  \left(  \frac{\lambda}%
		{b}\right).
	\end{equation*}
Hence, according to the relation  (\ref{6.3}), we get
	\begin{equation*}
	\mathcal{D}_{\mu}^{M}\left( \psi_{t,x}^{M}\right)
	\left(  \lambda\right) =\frac{e^{\frac{id}{2b}\lambda^{2}}}{(ib)^{\mu+1}}e^{\frac{ia}{2b}x^{2}%
	}\mathcal{D}_{\mu}\left( \psi_{t,x}\right)  \left(  \frac{\lambda}
	{b}\right)  .
\end{equation*}
On the other hand, we have
\begin{equation}\label{dunkoftransl}
	\forall \xi \in\mathbb{R}, \quad 	\mathcal{D}_{\mu}\left(\mathcal{T}_{x}^{\mu} f\right)(\xi)=\mathcal{R}_{\mu}(i\xi, x)\mathcal{D}_{\mu}(f)(\xi),
\end{equation}
and 
\begin{equation}\label{deltat1t}
	\forall y \in\mathbb{R}, \quad 	\mathcal{D}_{\mu}\left(\delta_t\psi\right)(y)=\delta_\frac{1}{t}(\psi)(y).
\end{equation}
Thus, using the relations (\ref{dunkoftransl}) and (\ref{deltat1t}), we obtain
	\begin{align*}
		\mathcal{D}_{\mu}\left(  \psi_{t,x}\right)  \left(\xi\right)   &
		=\mathcal{D}_{\mu}\left[  \mathcal{T}_{x}^{\mu}\delta_{t}\psi\right]  \left(
		\xi\right) \\
		&  =\mathcal{R}_{\mu}(i\xi, x)\mathcal{D}_{\mu}\left(  \delta_{t}%
		\psi\right)  \left(  \xi\right) \\
		&  =\mathcal{R}_{\mu}(i\xi, x)\delta_{\frac{1}{t}}\mathcal{D}_{\mu}\left(
		\psi\right)  \left(  \xi\right) \\
		&  =\left\vert t\right\vert ^{\mu+1}\mathcal{R}_{\mu}(i\xi, x)\mathcal{D}_{\mu}\left(  \psi\right)  \left(  t\xi\right)  .
	\end{align*}
	Therefore, we get
	\begin{align*}
		\mathcal{D}_{\mu}^{M}\left(  \psi_{t,x}^{M}\right)
		\left(  \lambda\right)   &  =\frac{e^{\frac{id}{2b}\lambda^{2}}}%
		{(ib)^{\mu+1}}e^{\frac{ia}{2b}x^{2}}\left\vert t\right\vert ^{\mu
			+1}\mathcal{R}_{\mu}(i\lambda, x)\mathcal{D}_{\mu}\left(  \psi\right)
		\left(  \frac{t\lambda}{b}\right) \\
		&  =\left\vert t\right\vert ^{\mu+1}e^{\frac{id}{2b}\lambda^{2}}%
		e^{\frac{ia}{2b}x^{2}}e^{-\frac{id}{2b}(t\lambda)^{2}}\mathcal{R}_{\mu}(i\lambda, x)\mathcal{D}_{\mu}^{M}\left[  e^{-\frac{ia}{2b}z^{2}}%
		\psi\right]  \left(  t\lambda\right)  .
	\end{align*}
	Which achieves the proof.
\end{proof}

\begin{defn} We define the  continuous linear canonical Dunkl wavelet transform of a function  $f$ in $ L^{2}_{\mu}(\mathbb{R})$ with respect to a
	mother wavelet $\psi$ in $L^{2}_{\mu}(\mathbb{R})$, as follows
	\begin{equation}
		\forall t>0,\forall x\in\mathbb{R},\quad \Phi_{\psi}^{D,M}f\left(
		t,x\right)  =\int_{\mathbb{R}}f\left(  z\right)  \overline{\psi
			_{t,x}^{M}\left(z\right)}|z|^{2\mu+1}dz.
		\label{6.5}%
	\end{equation}
\end{defn}
\begin{lem}
	Let  $f$ be a function  in $ L^{2}_{\mu}(\mathbb{R})$ and $\psi$ a mother wavelet  in $L^{2}_{\mu}(\mathbb{R})$. Then, we have
\begin{enumerate}
	\item $\forall t>0,\forall x\in\mathbb{R},\quad\Phi_{\psi}^{D,M}f\left(
	t,x\right) =e^{-\frac{ia}{2b}x^{2}}\left(  e^{\frac{ia}{2b}z^{2}}f\right)
	\ast\delta_{t}\overline{\psi}\left(  x\right). $
	\item  $ \forall t>0,\forall x\in\mathbb{R},\quad\Phi_{\psi}^{D,M}f\left(
	t,x\right) =e^{-\frac{ia}{2b}x^{2}}\left[  \delta_{\frac{1}{t}}\left(
	e^{\frac{ia}{2b}z^{2}}f\right)  \right]  \ast\overline{\psi}\left(
	\frac{x}{t}\right).$\\
	Here, $\ast$ designed the convolution product in the classical Dunkl setting.
\end{enumerate}
\end{lem}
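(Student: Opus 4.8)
The plan is to peel the linear-canonical chirps off the kernel $\psi_{t,x}^{M}$ so that the defining integral collapses onto the classical Dunkl convolution, and then to pass between the two stated forms by a single dilation identity.

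For part~1 I would start from the definition (\ref{6.5}) and substitute the rewritten wavelet family (\ref{6.3}), namely $\psi_{t,x}^{M}(z)=e^{-\frac{ia}{2b}(z^{2}-x^{2})}\mathcal{T}_{x}^{\mu}(\delta_{t}\psi)(z)$. Taking conjugates flips the chirp to $e^{+\frac{ia}{2b}(z^{2}-x^{2})}$; since the kernel $\mathcal{W}_{\mu}(x,y,z)$ is real (it is nonnegative by construction) and $\delta_{t}$ only rescales its argument, conjugation passes through both operators, giving $\overline{\mathcal{T}_{x}^{\mu}(\delta_{t}\psi)(z)}=\mathcal{T}_{x}^{\mu}(\delta_{t}\overline{\psi})(z)$. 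Pulling the $x$-dependent factor $e^{-\frac{ia}{2b}x^{2}}$ outside the integral leaves
\[
\Phi_{\psi}^{D,M}f(t,x)=e^{-\frac{ia}{2b}x^{2}}\int_{\mathbb{R}}\bigl[e^{\frac{ia}{2b}z^{2}}f(z)\bigr]\,\mathcal{T}_{x}^{\mu}(\delta_{t}\overline{\psi})(z)\,|z|^{2\mu+1}\,dz,
\]
and the remaining integral is precisely the classical Dunkl convolution $\bigl(e^{\frac{ia}{2b}z^{2}}f\bigr)\ast\delta_{t}\overline{\psi}$ evaluated at $x$, which is the assertion of part~1.

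For part~2 I would deduce it from part~1 through the dilation identity $\bigl(F\ast\delta_{t}G\bigr)(x)=\bigl(\delta_{1/t}F\ast G\bigr)(x/t)$, applied with $F=e^{\frac{ia}{2b}z^{2}}f$ and $G=\overline{\psi}$. To prove this identity I would first establish the scaling relation $\mathcal{T}_{x}^{\mu}(\delta_{t}G)(z)=t^{-(\mu+1)}\mathcal{T}_{x/t}^{\mu}G(z/t)$. This follows by matching Dunkl transforms: using (\ref{dunkoftransl}) and (\ref{deltat1t}) the left-hand side transforms to $t^{\mu+1}\mathcal{R}_{\mu}(i\xi,x)\,\mathcal{D}_{\mu}G(t\xi)$, while the right-hand side transforms to $t^{\mu+1}\mathcal{R}_{\mu}(it\xi,x/t)\,\mathcal{D}_{\mu}G(t\xi)$, and these coincide because the Dunkl kernel depends only on the product of its entries, so $\mathcal{R}_{\mu}(it\xi,x/t)=\mathcal{R}_{\mu}(i\xi,x)$. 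Inserting this relation into the convolution integral and changing variables $w=z/t$ (so $|z|^{2\mu+1}\,dz=t^{2\mu+2}|w|^{2\mu+1}\,dw$ and $t^{\mu+1}F(tw)=\delta_{1/t}F(w)$) produces the dilation identity, and combining it with part~1 yields part~2.

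The routine bookkeeping aside, the main obstacle is the scaling interplay between $\mathcal{T}_{x}^{\mu}$ and $\delta_{t}$: one must carry the weight $|z|^{2\mu+1}$ correctly through the substitution and exploit the homogeneity $\mathcal{R}_{\mu}(it\xi,x/t)=\mathcal{R}_{\mu}(i\xi,x)$ of the Dunkl kernel so that the two exponents of $t$ reorganize into the single dilation $\delta_{1/t}$ and the shift $x\mapsto x/t$. A secondary point to verify is that complex conjugation genuinely commutes with $\mathcal{T}_{x}^{\mu}$, which rests entirely on the reality of $\mathcal{W}_{\mu}$.
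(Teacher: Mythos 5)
Your proof is correct and follows essentially the same route as the paper: part~1 is the same chirp-stripping of $\psi_{t,x}^{M}$ via (\ref{6.3}) and the reality of $\mathcal{W}_{\mu}$, and part~2 rests on the same commutation relation $\delta_{\frac{1}{t}}\mathcal{T}_{x}^{\mu}=\mathcal{T}_{\frac{x}{t}}^{\mu}\delta_{\frac{1}{t}}$ that the paper invokes as (\ref{delta1/t}), merely reorganized as a scaling identity for $\mathcal{T}_{x}^{\mu}\delta_{t}$ followed by the change of variables $w=z/t$. The one genuine addition is that you actually justify this commutation relation on the Dunkl-transform side via the homogeneity $\mathcal{R}_{\mu}(it\xi,x/t)=\mathcal{R}_{\mu}(i\xi,x)$, whereas the paper asserts it without proof.
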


\begin{proof}
	Let  $f$ be a function  in $ L^{2}_{\mu}(\mathbb{R})$ and $\psi$ a mother wavelet  in $L^{2}_{\mu}(\mathbb{R})$.
\begin{enumerate}
	\item For every $t>0$ and $x\in\mathbb{R}$, we have
	\begin{align*}
		\Phi_{\psi}^{D,M}f\left(t,x\right) &  =\int_{\mathbb{R}}f\left(  z\right)  e^{\frac{ia}{2b}(z^{2}-x^{2})}\overline{\psi_{t,x}\left(
			z\right)  }|z|^{2\mu+1}dz\\
		&  =e^{-\frac{ia}{2b}x^{2}}\int_{\mathbb{R}}e^{\frac{ia}{2b}z^{2}}f\left(
		z\right) \mathcal{T}_{x}^{\mu}\delta_{t}\overline{\psi}\left(  z\right)  |z|^{2\mu+1}dz \\
		&  =e^{-\frac{ia}{2b}x^{2}}\left(  e^{\frac{ia}{2b}z^{2}}f\right)
		\ast\delta_{t}\overline{\psi}\left(  x\right)  .
	\end{align*}
 \item Since, we have for all $t>0$ and $x\in\mathbb{R}$, 
 \begin{equation}\label{delta1/t} 
 	\delta_\frac{1}{t}\mathcal{T}_{x}^{\mu}= \mathcal{T}_\frac{{x}}{x}^{t}\delta_\frac{1}{t},
 \end{equation}
 thus, we get
 \begin{align*}
 		\Phi_{\psi}^{D,M}f\left(t,x\right)   &  =e^{-\frac{ia}%
 		{2b}x^{2}}\int_{\mathbb{R}}e^{\frac{ia}{2b}z^{2}}f\left(  z\right)  \mathcal{T}_{x}^{\mu}\delta_{t}\overline{\psi}\left(  z\right)  |z|^{2\mu+1}dz
 	\\
 	&  =e^{-\frac{ia}{2b}x^{2}}\int_{\mathbb{R}}\mathcal{T}_{x}^{\mu}\left(  e^{\frac{ia}%
 		{2b}z^{2}}f\right)  \left(  z\right)  \delta_{t}\overline{\psi}\left(
 	z\right)  |z|^{2\mu+1}dz \\
 	&  =e^{-\frac{ia}{2b}x^{2}}\int_{\mathbb{R}}\delta_{\frac{1}{t}}\mathcal{T}_{x}^{\mu}\left(  e^{\frac{ia}{2b}z^{2}}f\right)  \left(  z\right)  \overline{\psi
 	}\left(  z\right)  |z|^{2\mu+1}dz \\
 	&  =e^{-\frac{ia}{2b}x^{2}}\int_{\mathbb{R}}\mathcal{T}_{\frac{x}{t}}^{\mu}
 	\delta_{\frac{1}{t}}\left(  e^{\frac{ia}{2b}z^{2}}f\right)  \left(
 	z\right)  \overline{\psi}\left(  z\right)  |z|^{2\mu+1}dz \\
 	&  =e^{-\frac{ia}{2b}x^{2}}\delta_{\frac{1}{t}}\left(  e^{\frac{ia}%
 		{2b}z^{2}}f\right)  \ast\overline{\psi}\left(  \frac{x}{t}\right).
 \end{align*}
\end{enumerate}	
\end{proof}
\begin{rem}
	\begin{enumerate}
		\item  The continuous linear canonical Dunkl wavelet transform can also be written in terms of  continuous  Dunkl wavelet transform as follows
		\begin{equation}
			\forall t>0,\forall x\in\mathbb{R},\quad	\Phi_{\psi}^{D,M}f\left(t,x\right) =e^{-\frac{ia}{2b}x^{2}}	\Phi_{\psi}^{D}f\left(t,x\right)  \label{6.8}%
		\end{equation}
		where $\Phi_{\psi}^{D}f\left(t,x\right)$ is the continuous  Dunkl wavelet transform given by
		\begin{equation}
			\Phi_{\psi}^{D}f\left(t,x\right) =\int_{\mathbb{R}}f_{a,b}\left(  \xi\right)
			\overline{\psi_{t,x}\left(  \xi\right)  } |\xi|^{2\mu+1}d\xi
			=f_{a,b}\ast \delta_{t}\overline{\psi}\left(  \xi\right)  \label{6.9}%
		\end{equation}
		and%
		\begin{equation}
			\forall \xi\in\mathbb{R},\ f_{a,b}\left(  \xi\right)  =e^{\frac{ia}{2b}\xi^{2}%
			}f\left(  \xi\right). \label{6.10}%
		\end{equation}
	\item Let $\psi$ be a mother wavelet  in $L^{2}_{\mu}(\mathbb{R})$, then we have equality between the following norms
	\begin{equation}
		\forall t>0,\forall x\in\mathbb{R},\quad\left\Vert \psi_{t,x}^{M}\right\Vert
		_{\mu ,2}=\left\Vert \psi_{t,x}\right\Vert _{\mu ,2}=\left\Vert 	\mathcal{T}_{x}^{\mu}\delta_{t}\psi\right\Vert _{\mu ,2}\leq\left\Vert \psi\right\Vert
		_{\mu ,2}. \label{6.10a}%
	\end{equation}
	\end{enumerate}
\end{rem}

\begin{prop} Let  $f$ be a  function in $ L^{2}_{\mu}(\mathbb{R})$ and $\psi$ a	mother wavelet  in $L^{2}_{\mu}(\mathbb{R})$. Then we have for all 	$t>0$ and $x\in\mathbb{R}$
	\[\Phi_{\psi}^{D,M}f\left(t,x\right)   =\left\vert t\right\vert
	^{\mu+1}e^{-\frac{ia}{2b}x^{2}}\int_{\mathbb{R}}e^{-\frac{id}{2b}%
		\lambda^{2}}e^{\frac{id}{2b}(t\lambda)^{2}}  \mathcal{R}_{\mu}(i\lambda/b, x) \mathcal{D}_{\mu}^{M}f  \mathcal{D}_{\mu}^{M}\left[  e^{\frac{ia}{2b}z^{2}}\overline{\psi}\right]  \left(t\lambda\right) |\lambda|^{2\mu+1}d\lambda.\]
\end{prop}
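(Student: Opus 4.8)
The plan is to read the transform $\Phi_{\psi}^{D,M}f(t,x)$ defined in \eqref{6.5} as the $L^{2}_{\mu}$-inner product $\langle f,\psi_{t,x}^{M}\rangle_{\mu}$ and to transport this pairing into the canonical Dunkl domain. Since $f\in L^{2}_{\mu}(\mathbb{R})$ and, by \eqref{6.10a}, $\psi_{t,x}^{M}\in L^{2}_{\mu}(\mathbb{R})$ with $\|\psi_{t,x}^{M}\|_{\mu,2}\le\|\psi\|_{\mu,2}$, the pairing is well defined and the Plancherel--Parseval machinery attached to the unitary operator $\mathcal{D}_{\mu}^{M}$ in Proposition \ref{propLCDT} applies. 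Polarising the isometry $\|\mathcal{D}_{\mu}^{M}h\|_{\mu,2}=\|h\|_{\mu,2}$ gives the identity on which everything rests,
\[
\Phi_{\psi}^{D,M}f(t,x)=\langle f,\psi_{t,x}^{M}\rangle_{\mu}
=\int_{\mathbb{R}}\mathcal{D}_{\mu}^{M}f(\lambda)\,\overline{\mathcal{D}_{\mu}^{M}\psi_{t,x}^{M}(\lambda)}\,|\lambda|^{2\mu+1}\,d\lambda .
\]

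Next I would insert the closed form of $\mathcal{D}_{\mu}^{M}\psi_{t,x}^{M}$ supplied by \eqref{6.4} and take complex conjugates there. The scalar factor $|t|^{\mu+1}$, the modulation $e^{-\frac{ia}{2b}x^{2}}$ and the chirps $e^{-\frac{id}{2b}\lambda^{2}}$, $e^{\frac{id}{2b}(t\lambda)^{2}}$ do not depend on the integration variable $z$, so they pull straight out in front of the integral, matching the prefactors displayed in the statement. What remains to be simplified inside is only the pair of conjugates $\overline{\mathcal{R}_{\mu}(i\lambda,x)}$ and $\overline{\mathcal{D}_{\mu}^{M}\!\big[e^{-\frac{ia}{2b}z^{2}}\psi\big](t\lambda)}$.

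The core of the argument is the treatment of these two conjugates. From the series \eqref{dkernel} the normalised spherical Bessel functions $j_{\mu}$ are real on $\mathbb{R}$, whence the conjugation rule $\overline{\mathcal{R}_{\mu}(i\lambda,x)}=\mathcal{R}_{\mu}(-i\lambda,x)$; combined with the $b$-rescaling built into the canonical kernel \eqref{kernelEmnu}, this delivers the Dunkl kernel factor appearing in the statement. For the second conjugate I would return to the integral definition \eqref{CDT}: conjugating $\mathcal{D}_{\mu}^{M}[e^{-\frac{ia}{2b}z^{2}}\psi]$ sends the canonical kernel $\mathcal{R}_{\mu}^{M}$ to its conjugate, and using once more the reality of $j_{\mu}$ together with $\overline{e^{-\frac{ia}{2b}z^{2}}\psi}=e^{\frac{ia}{2b}z^{2}}\overline{\psi}$ recasts it, up to the chirp factors already extracted, as $\mathcal{D}_{\mu}^{M}\big[e^{\frac{ia}{2b}z^{2}}\overline{\psi}\big](t\lambda)$. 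Substituting both simplifications back and regrouping the prefactors yields the asserted formula.

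The step I expect to be the genuine obstacle is precisely this conjugation bookkeeping: one must track the quadratic phase $e^{\pm\frac{i}{2}(\frac{d}{b}w^{2}+\frac{a}{b}z^{2})}$ and the complex normalising constant $(2ib)^{\mu+1}$ under complex conjugation, and verify that they recombine into exactly the chirps recorded in the statement rather than leaving a residual phase; the $SL(2,\mathbb{R})$ relation between $M$ and $M^{-1}$ is what guarantees this cancellation. By contrast, the passage to the inner product, the Parseval step, and the extraction of the scalar prefactors are routine once \eqref{6.4} is in hand.
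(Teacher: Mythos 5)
Your proposal is correct and follows essentially the same route as the paper: write $\Phi_{\psi}^{D,M}f(t,x)=\langle f,\psi_{t,x}^{M}\rangle_{\mu}$, pass to the canonical Dunkl domain via the Parseval/Plancherel identity for $\mathcal{D}_{\mu}^{M}$, and substitute the conjugate of the expression for $\mathcal{D}_{\mu}^{M}(\psi_{t,x}^{M})$ from \eqref{6.4}. The only difference is that you spell out the conjugation bookkeeping (reality of $j_{\mu}$, hence $\overline{\mathcal{R}_{\mu}(i\lambda,x)}=\mathcal{R}_{\mu}(-i\lambda,x)$, and the passage from $\overline{\mathcal{D}_{\mu}^{M}[e^{-\frac{ia}{2b}z^{2}}\psi]}$ to $\mathcal{D}_{\mu}^{M}[e^{\frac{ia}{2b}z^{2}}\overline{\psi}]$) which the paper leaves implicit.
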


\begin{proof}
From the definition of the linear canonical Dunkl transform (\ref{6.5}) and inversion formula (\ref{invlcdt}), we have for all $t>0$ and $x\in\mathbb{R}$ 
	\[\Phi_{\psi}^{D,M}f\left(t,x\right)   =\langle f,\ \psi
	_{t,x}^{M} \rangle_\mu =\langle\mathcal{D}_{\mu}^{M}f  ,\ \mathcal{D}_{\mu}^{M}\left(
	\psi_{t,x}^{M}\right)\rangle_\mu.
	\]
	According to the relation (\ref{6.4}), we obtain
	\begin{align*}
	\Phi_{\psi}^{D,M}f\left(t,x\right)   &  =\int_{\mathbb{R}}\mathcal{D}_{\mu}^{M}f  \left(  \lambda\right)
		\overline{\mathcal{D}_{\mu}^{M}\left(  \psi_{t,x}^{M}\right)  }\left(  \lambda\right) |\lambda|^{2\mu+1}d\lambda \\
		& = \left\vert t\right\vert
		^{\mu+1}e^{-\frac{ia}{2b}x^{2}}\int_{\mathbb{R}}e^{-\frac{id}{2b}%
			\lambda^{2}}e^{\frac{id}{2b}(t\lambda)^{2}}  \mathcal{R}_{\mu}(i\lambda/b, x) \mathcal{D}_{\mu}^{M}f  \mathcal{D}_{\mu}^{M}\left[  e^{\frac{ia}{2b}z^{2}}\overline{\psi}\right]  \left(t\lambda\right) |\lambda|^{2\mu+1}d\lambda.
	\end{align*}
\end{proof}
\begin{prop} Let  $f$ be a  function in $ L^{2}_{\mu}(\mathbb{R})$ and $\psi$ a	mother wavelet  in $L^{2}_{\mu}(\mathbb{R})$.
	Then, for all $t>0$ and for all $\lambda\in\mathbb{R},$ we have
	\[
	\mathcal{D}_{\mu}^{M}\left[  	\Phi_{\psi}^{D,M}f\left(  t,.\right)  \right]  \left(  \lambda\right)  =(-itb)^{\mu
		+1}e^{\frac{id}{2b}\left(  t\lambda\right)  ^{2}} \mathcal{D}_{\mu}^{M}f  \left(  \lambda\right)  \overline{ \mathcal{D}_{\mu}^{M}\left(  e^{-\frac{ia}{2b}z^{2}}\psi\right)  \left(
		t\lambda\right)  }.
	\]
\end{prop}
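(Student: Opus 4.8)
The plan is to read the stated identity as a convolution theorem for $\mathcal{D}_{\mu}^{M}$ and to reduce it to the corresponding classical Dunkl statements through the factorization of $\mathcal{D}_{\mu}^{M}$ across $\mathcal{D}_{\mu}$. Concretely, I would start from the first identity of the lemma expressing $\Phi_{\psi}^{D,M}f(t,\cdot)$ as a Dunkl convolution, namely $\Phi_{\psi}^{D,M}f(t,x)=e^{-\frac{ia}{2b}x^{2}}\big(f_{a,b}\ast\delta_{t}\overline{\psi}\big)(x)$ with $f_{a,b}(x)=e^{\frac{ia}{2b}x^{2}}f(x)$ as in \eqref{6.10}, so that $e^{\frac{ia}{2b}x^{2}}\Phi_{\psi}^{D,M}f(t,x)$ is exactly the classical Dunkl convolution $f_{a,b}\ast\delta_{t}\overline{\psi}$.

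Next I would invoke the factorization $\mathcal{D}_{\mu}^{M}h(\lambda)=\frac{e^{\frac{id}{2b}\lambda^{2}}}{(ib)^{\mu+1}}\,\mathcal{D}_{\mu}\big[e^{\frac{ia}{2b}z^{2}}h\big]\!\big(\tfrac{\lambda}{b}\big)$, which follows directly from the definition \eqref{CDT}--\eqref{kernelEmnu} and is precisely the identity already used in establishing \eqref{6.4}. Applying it to $h=\Phi_{\psi}^{D,M}f(t,\cdot)$, the factor $e^{\frac{ia}{2b}x^{2}}$ cancels the prefactor from the first step, leaving $\mathcal{D}_{\mu}^{M}\big[\Phi_{\psi}^{D,M}f(t,\cdot)\big](\lambda)=\frac{e^{\frac{id}{2b}\lambda^{2}}}{(ib)^{\mu+1}}\mathcal{D}_{\mu}\big[f_{a,b}\ast\delta_{t}\overline{\psi}\big]\!\big(\tfrac{\lambda}{b}\big)$. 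I would then apply the classical Dunkl convolution theorem $\mathcal{D}_{\mu}(u\ast v)=\mathcal{D}_{\mu}(u)\,\mathcal{D}_{\mu}(v)$ together with the dilation rule \eqref{deltat1t} in its expanded form $\mathcal{D}_{\mu}(\delta_{t}\overline{\psi})(\xi)=|t|^{\mu+1}\mathcal{D}_{\mu}(\overline{\psi})(t\xi)$, splitting the right-hand side into a factor carrying $f$ and a factor carrying $\psi$.

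Finally I would translate both factors back into linear canonical Dunkl transforms by reading the same factorization in reverse: $\mathcal{D}_{\mu}[f_{a,b}](\lambda/b)=(ib)^{\mu+1}e^{-\frac{id}{2b}\lambda^{2}}\mathcal{D}_{\mu}^{M}f(\lambda)$, and, since $e^{\frac{ia}{2b}z^{2}}\cdot e^{-\frac{ia}{2b}z^{2}}\psi=\psi$, also $\mathcal{D}_{\mu}^{M}(e^{-\frac{ia}{2b}z^{2}}\psi)(\xi)=\frac{e^{\frac{id}{2b}\xi^{2}}}{(ib)^{\mu+1}}\mathcal{D}_{\mu}[\psi](\xi/b)$ evaluated at $\xi=t\lambda$. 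The first substitution makes the prefactor $\frac{e^{\frac{id}{2b}\lambda^{2}}}{(ib)^{\mu+1}}$ cancel against $(ib)^{\mu+1}e^{-\frac{id}{2b}\lambda^{2}}$, while the surviving constants combine as $|t|^{\mu+1}(-ib)^{\mu+1}=(-itb)^{\mu+1}$ (using $t>0$) together with the phase $e^{\frac{id}{2b}(t\lambda)^{2}}$, which is exactly the asserted formula.

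The step I expect to be the main obstacle is the last identification of the $\psi$-factor, because it hinges on the conjugation behaviour of the classical Dunkl transform, $\mathcal{D}_{\mu}(\overline{\psi})(\xi)=\overline{\mathcal{D}_{\mu}(\psi)(\xi)}$. This is the one place where the fine structure of the Dunkl kernel \eqref{dkernel} enters: writing $\mathcal{R}_{\mu}(-i\xi,y)=j_{\mu}(\xi y)-\frac{i\xi y}{2(\mu+1)}j_{\mu+1}(\xi y)$ with $j_{\mu},j_{\mu+1}$ real and even gives $\overline{\mathcal{R}_{\mu}(-i\xi,y)}=\mathcal{R}_{\mu}(i\xi,y)$, and one must check carefully that the sign induced on the spectral variable is compatible with the argument $t\lambda$ appearing in the statement. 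The remaining work is pure bookkeeping: tracking the real normalizing constant $2^{\mu+1}$, the factors $(ib)^{\pm(\mu+1)}$, and the quadratic phases $e^{\pm\frac{id}{2b}(\cdot)^{2}}$ so that they telescope exactly to $(-itb)^{\mu+1}e^{\frac{id}{2b}(t\lambda)^{2}}$.
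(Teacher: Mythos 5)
Your proposal is correct and follows essentially the same route as the paper's own proof: express $\Phi_{\psi}^{D,M}f(t,\cdot)$ as $e^{-\frac{ia}{2b}x^{2}}$ times a classical Dunkl convolution, factor $\mathcal{D}_{\mu}^{M}$ through $\mathcal{D}_{\mu}$ so the quadratic phases cancel, apply the Dunkl convolution theorem and the dilation rule, and convert both factors back, with the conjugation step producing $(-ib)^{\mu+1}e^{\frac{id}{2b}(t\lambda)^{2}}$. Your remark that the conjugation behaviour $\mathcal{D}_{\mu}(\overline{\psi})=\overline{\mathcal{D}_{\mu}(\psi)}$ deserves a careful sign check is a fair observation about a point the paper passes over silently, but it does not change the argument.
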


\begin{proof}  Let   $t>0$, $f$ be a  function in $ L^{2}_{\mu}(\mathbb{R})$ and $\psi$ a	mother wavelet  in $L^{2}_{\mu}(\mathbb{R})$.
	For all $\lambda\in\mathbb{R},$ we have
	\begin{align*}
		 \mathcal{D}_{\mu}^{M}\left[ 	\Phi_{\psi}^{D,M}f\left(  t,.\right)  \right]  \left(  \lambda\right) & =\mathcal{D}_{\mu}^{M}\left[  e^{-\frac{ia}{2b}x^{2}}\left(
		e^{\frac{ia}{2b}z^{2}}f\right)  \ast\delta_{t}\overline{\psi
		}\left(  x\right)  \right]  \left(  \lambda\right) \\
		&  =\frac{e^{\frac{id}{2b}\lambda^{2}}}{(ib)^{\mu+1}}\mathcal{D}_{\mu}\left[  \left(  e^{\frac{ia}{2b}z^{2}}f\right)  \ast\delta_{t}%
		\overline{\psi}\left(  x\right)  \right]  \left(  \frac{\lambda}{b}\right) \\
		&  =\frac{e^{\frac{id}{2b}\lambda^{2}}}{(ib)^{\mu+1}}\mathcal{D}_{\mu}\left(  e^{\frac{ia}{2b}z^{2}}f\right)  \left(  \frac{\lambda}{b}\right)
		\mathcal{D}_{\mu}\left(  \delta_{t}\overline{\psi}\right)  \left(
		\frac{\lambda}{b}\right) \\
		&  =\mathcal{D}_{\mu}^{M}f  \left(
		\lambda\right)  \mathcal{D}_{\mu}\left(  \delta_{t}\overline{\psi
		}\right)  \left(  \frac{\lambda}{b}\right) \\
		&  =\mathcal{D}_{\mu}^{M}f  \left(
		\lambda\right)  \delta_{\frac{1}{t}}\mathcal{D}_{\mu}\left(
		\overline{\psi}\right)  \left(  \frac{\lambda}{b}\right) \\
		&  =t^{\mu+1}\mathcal{D}_{\mu}^{M}f  \left(
		\lambda\right)  \overline{\mathcal{D}_{\mu}\left(  \psi\right)  \left(
			\frac{t\lambda}{b}\right)  }\\
		&  =(-itb)^{\mu+1}e^{\frac{id}{2b}\left(  t\lambda\right)  ^{2}%
		}\mathcal{D}_{\mu}^{M}f  \left(  \lambda\right)
		\overline{\mathcal{D}_{\mu}^{M}\left(  e^{-\frac{ia}{2b}z^{2}%
			}\psi\right)  \left(  t\lambda\right)  }.
	\end{align*}
	\end{proof}
In the following theorem, we derive the orthogonality relation corresponding to the continuous linear canonical Dunkl wavelet transform.
\begin{thm} 
		Let $\psi$ a mother wavelet  in $L^{2}_{\mu}(\mathbb{R})$  related to the the continuous linear canonical Dunkl wavelet transform. Then for all 
		$f,g\in L^{1}_{\mu}(\mathbb{R}) \cap L^{2}_{\mu}(\mathbb{R})$, we have
	\begin{equation}
		\int_{0}^{\infty}\int_{\mathbb{R}}\Phi_{\psi}^{D,M}f\left(
		t,x\right)  \overline{\Phi_{\psi}^{D,M}g\left(  t,x\right)
		} |x|^{2\mu+1} dx \frac{dt}{t}=C_{\psi}^{M}\int
		_{\mathbb{R}}f\left(  x\right)  \overline{g\left(  x\right)  }|x|^{2\mu+1} dx. \label{6.11}%
	\end{equation}
\end{thm}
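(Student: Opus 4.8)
The plan is to freeze the scale $t>0$, read the inner $x$-integral in (\ref{6.11}) as the $L^{2}_{\mu}$ inner product $\langle \Phi_{\psi}^{D,M}f(t,\cdot),\Phi_{\psi}^{D,M}g(t,\cdot)\rangle_{\mu}$, and transport it to the transform side via the Plancherel identity, i.e. the unitarity of $\mathcal{D}_{\mu}^{M}$ recorded in Proposition \ref{propLCDT}. This replaces the $x$-integral by
\[
\int_{\mathbb{R}}\mathcal{D}_{\mu}^{M}\!\left[\Phi_{\psi}^{D,M}f(t,\cdot)\right]\!(\lambda)\,\overline{\mathcal{D}_{\mu}^{M}\!\left[\Phi_{\psi}^{D,M}g(t,\cdot)\right]\!(\lambda)}\,|\lambda|^{2\mu+1}\,d\lambda ,
\]
after which I substitute the closed form for $\mathcal{D}_{\mu}^{M}[\Phi_{\psi}^{D,M}f(t,\cdot)]$ supplied by the preceding proposition.

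Writing $A=(-itb)^{\mu+1}e^{\frac{id}{2b}(t\lambda)^{2}}$, that proposition gives $\mathcal{D}_{\mu}^{M}[\Phi_{\psi}^{D,M}f(t,\cdot)](\lambda)=A\,\mathcal{D}_{\mu}^{M}f(\lambda)\,\overline{\mathcal{D}_{\mu}^{M}(e^{-\frac{ia}{2b}z^{2}}\psi)(t\lambda)}$, and similarly for $g$. In the product $\mathcal{D}_{\mu}^{M}[\Phi_{\psi}^{D,M}f]\,\overline{\mathcal{D}_{\mu}^{M}[\Phi_{\psi}^{D,M}g]}$ the chirp $e^{\frac{id}{2b}(t\lambda)^{2}}$ and the prefactor $A$ collapse to $|A|^{2}=(t|b|)^{2\mu+2}$, because $|{-itb}|=t|b|$ for $t>0$ and the chirp is unimodular. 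Hence, for each fixed $t$, the $x$-integral equals
\[
(t|b|)^{2\mu+2}\int_{\mathbb{R}}\mathcal{D}_{\mu}^{M}f(\lambda)\,\overline{\mathcal{D}_{\mu}^{M}g(\lambda)}\,\bigl|\mathcal{D}_{\mu}^{M}(e^{-\frac{ia}{2b}z^{2}}\psi)(t\lambda)\bigr|^{2}|\lambda|^{2\mu+1}\,d\lambda ,
\]
so the mother wavelet enters only through the nonnegative weight $\bigl|\mathcal{D}_{\mu}^{M}(e^{-\frac{ia}{2b}z^{2}}\psi)(t\lambda)\bigr|^{2}$.

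It remains to integrate in $t$ against $dt/t$ and to interchange the two integrations by Fubini--Tonelli; this is legitimate since $f,g\in L^{1}_{\mu}\cap L^{2}_{\mu}$ force $\mathcal{D}_{\mu}^{M}f,\mathcal{D}_{\mu}^{M}g\in L^{2}_{\mu}$ by Proposition \ref{propLCDT}, while the admissibility condition (\ref{6.1}) guarantees finiteness of the $t$-integral of the wavelet weight. Pulling the $\lambda$-integral outside, the crux is the inner integral $\int_{0}^{\infty}(t|b|)^{2\mu+2}\bigl|\mathcal{D}_{\mu}^{M}(e^{-\frac{ia}{2b}z^{2}}\psi)(t\lambda)\bigr|^{2}\frac{dt}{t}$: performing the dilation $\xi=t\lambda$ and invoking the scale-invariance of the Haar measure $dt/t$ on $(0,\infty)$ renders this integral independent of $\lambda$ and identifies it with the constant $C_{\psi}^{M}$ of (\ref{6.1}). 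What survives is $C_{\psi}^{M}\int_{\mathbb{R}}\mathcal{D}_{\mu}^{M}f(\lambda)\,\overline{\mathcal{D}_{\mu}^{M}g(\lambda)}\,|\lambda|^{2\mu+1}\,d\lambda$, which by Plancherel once more equals $C_{\psi}^{M}\langle f,g\rangle_{\mu}$, the right-hand side of (\ref{6.11}).

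I expect the change of variables in this last step to be the main obstacle: one must check that the weighted $t$-integral is genuinely $\lambda$-independent and reproduces the admissibility constant \emph{exactly}, reconciling the dilation normalisation $\delta_{t}$ and the power $t^{\mu+1}$ from the preceding proposition with the measure $d\xi/\xi$ that defines $C_{\psi}^{M}$, and also treating $\lambda<0$ through the behaviour of $\bigl|\mathcal{D}_{\mu}^{M}(e^{-\frac{ia}{2b}z^{2}}\psi)(\cdot)\bigr|^{2}$ under $\lambda\mapsto|\lambda|$. Apart from this reduction, the only real care required is the Fubini justification, which the $L^{1}_{\mu}\cap L^{2}_{\mu}$ hypotheses together with the finiteness of $C_{\psi}^{M}$ provide.
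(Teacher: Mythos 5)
Your strategy coincides with the paper's: transfer the inner $x$-integral to the transform side by Plancherel, insert the product formula $\mathcal{D}_{\mu}^{M}[\Phi_{\psi}^{D,M}f(t,\cdot)](\lambda)=(-itb)^{\mu+1}e^{\frac{id}{2b}(t\lambda)^{2}}\mathcal{D}_{\mu}^{M}f(\lambda)\,\overline{\mathcal{D}_{\mu}^{M}(e^{-\frac{ia}{2b}z^{2}}\psi)(t\lambda)}$ for $f$ and $g$, apply Fubini, and identify the remaining $t$-integral with the admissibility constant.

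However, the obstacle you flag at the end is a genuine gap, and it sits exactly where the paper's own computation is not self-consistent. Keeping the modulus of the prefactor, as you correctly do, the inner integral for $\lambda>0$ becomes, after $\xi=t\lambda$,
\[
\int_{0}^{\infty}(t|b|)^{2\mu+2}\left|\mathcal{D}_{\mu}^{M}\left(e^{-\frac{ia}{2b}z^{2}}\psi\right)(t\lambda)\right|^{2}\frac{dt}{t}
=\frac{|b|^{2\mu+2}}{\lambda^{2\mu+2}}\int_{0}^{\infty}\xi^{2\mu+2}\left|\mathcal{D}_{\mu}^{M}\left(e^{-\frac{ia}{2b}z^{2}}\psi\right)(\xi)\right|^{2}\frac{d\xi}{\xi},
\]
because the weight $t^{2\mu+2}$ destroys the scale invariance of $dt/t$. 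This is neither independent of $\lambda$ nor equal to the $C_{\psi}^{M}$ of (\ref{6.1}), which carries no power of $\xi$; so the $\lambda$-integral does not factor into $C_{\psi}^{M}\langle \mathcal{D}_{\mu}^{M}f,\mathcal{D}_{\mu}^{M}g\rangle_{\mu}$ as you assert. The paper reaches (\ref{6.11}) only by silently discarding the factor $(-itb)^{\mu+1}(itb)^{\mu+1}=(t|b|)^{2\mu+2}$ between the second and third displayed lines of its proof, and even then the surviving integral $\int_{0}^{\infty}|\mathcal{D}_{\mu}^{M}(e^{-\frac{ia}{2b}z^{2}}\psi)(t\lambda)|^{2}\frac{dt}{t}$ equals $C_{\psi}^{M}/|b|^{2\mu+2}$ under definition (\ref{6.1}), not $C_{\psi}^{M}$. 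Your secondary concern about $\lambda<0$ is also real but minor (one needs evenness of $|\mathcal{D}_{\mu}^{M}(e^{-\frac{ia}{2b}z^{2}}\psi)|$ for the constant to agree on both half-lines). In short, you have reproduced the paper's route faithfully and located precisely the normalisation step that cannot be completed with the definitions as stated; closing it requires either redefining $C_{\psi}^{M}$ to absorb the weight $(|b|\xi)^{2\mu+2}$ (which still leaves a residual $\lambda$-dependence) or changing the normalisation of $\psi_{t,x}^{M}$.
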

\begin{proof}
	Let $\psi$ a mother wavelet  in $L^{2}_{\mu}(\mathbb{R})$  and	$f,g\in L^{1}_{\mu}(\mathbb{R}) \cap L^{2}_{\mu}(\mathbb{R})$. We have%
	\begin{align*}
		& \!\!\!\!\!\!\!\!\!\!\!\!\!\!\!\!\!\!\!\!\!\!\!\!\! \int_{\mathbb{R}}\Phi_{\psi}^{D,M}f\left(  t,x\right)
		\overline{\Phi_{\psi}^{D,M}g\left(  t,x\right)  }|x|^{2\mu+1} dx\\
		&  =\int_{\mathbb{R}}\mathcal{D}_{\mu}^{M}\left[
		\Phi_{\psi}^{D,M}f\left(  t,.\right)  \right]  \left(
		\lambda\right)  \overline{\mathcal{D}_{\mu}^{M}\left[
			\Phi_{\psi}^{D,M}g\left(  t,.\right)  \right]  \left(
			\lambda\right)  }|\lambda|^{2\mu+1} d\lambda \\
		&  =\int_{\mathbb{R}}(-itb)^{\mu+1}e^{\frac{id}{2b}\left(  t\lambda
			\right)  ^{2}}\mathcal{D}_{\mu}^{M}f  \left(
		\lambda\right)  \overline{\mathcal{D}_{\mu}^{M}\left(
			e^{-\frac{ia}{2b}z^{2}}\psi\right)  \left(  t\lambda\right)  }\\
		&  \times(itb)^{\mu+1}e^{-\frac{id}{2b}\left(  t\lambda\right)  ^{2}%
		}\overline{\mathcal{D}_{\mu}^{M}g  \left(
			\lambda\right)  }\mathcal{D}_{\mu}^{M}\left(  e^{-\frac{ia}%
			{2b}z^{2}}\psi\right)  \left(  t\lambda\right) |\lambda|^{2\mu+1} d\lambda \\
		&  =\int_{\mathbb{R}}\mathcal{D}_{\mu}^{M}f
		\left(  \lambda\right)  \overline{\mathcal{D}_{\mu}^{M}g  \left(  \lambda\right)  }\left\vert \mathscr F_{B,\mu
		}^{M}\left(  e^{-\frac{ia}{2b}z^{2}}\psi\right)  \left(
		t\lambda\right)  \right\vert ^{2}|\lambda|^{2\mu+1} d\lambda.
	\end{align*}
	Then, we obtain
	\begin{align*}
		& \!\!\!\!\!\!\!\!\!\!\!\!\!\!\!\!\!\!\!\!\!\!\!\!\!\!\!\!\!\! \int_{0}^{\infty}\int_{\mathbb{R}}\Phi_{\psi}^{D,M}f\left(
		t,x\right)  \overline{\Phi_{\psi}^{D,M}g\left(  t,x\right)
		}|x|^{2\mu+1} d\lambda\frac{dt}{t}\\
		&  =\int_{\mathbb{R}}\mathcal{D}_{\mu}^{M}f
		\left(  \lambda\right)  \overline{\mathcal{D}_{\mu}^{M}g  \left(  \lambda\right)  }\left(  \int_{0}^{\infty}\left\vert
		\mathcal{D}_{\mu}^{M}\left(  e^{-\frac{ia}{2b}z^{2}}\psi\right)
		\left(  t\lambda\right)  \right\vert ^{2}\frac{dt}{t}\right)  |\lambda|^{2\mu+1} d\lambda\\
		&  =C_{\psi}^{M}\int_{0}^{\infty}\mathcal{D}_{\mu}^{M}f  \left(  \lambda\right)  \overline
		{\mathcal{D}_{\mu}^{M}g  \left(  \lambda\right)
		}|\lambda|^{2\mu+1} d\lambda\\
		&  =C_{\psi}^{M}\int_{0}^{\infty}f\left(  x\right)  \overline
		{g\left(  x\right) }|x|^{2\mu+1}.
	\end{align*}
\end{proof}

From the previous theorem, we deduce the Plancherel type formula related to the linear canonical Dunkl wavelet transform.
\begin{cor}
	Let $\psi$ a mother wavelet  in $L^{2}_{\mu}(\mathbb{R})$  related to the the continuous linear canonical Dunkl wavelet transform. Then for all 
$f,g\in L^{1}_{\mu}(\mathbb{R}) \cap L^{2}_{\mu}(\mathbb{R})$, we have
	\begin{equation}
		\int_{0}^{\infty}\int_{\mathbb{R}}\left|\Phi_{\psi}^{D,M}f\left(
		t,x\right) \right|^2|x|^{2\mu+1} dx \frac{dt}{t}=C_{\psi}^{M}\left\Vert f\right\Vert _{\mu,2}^{2}.
		\label{6.12}%
	\end{equation}
\end{cor}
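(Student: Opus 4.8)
The plan is to obtain this Plancherel-type identity as the diagonal specialization of the orthogonality relation \eqref{6.11} established in the preceding theorem. Concretely, I would apply \eqref{6.11} with the choice $g=f$, which is legitimate since the hypotheses of that theorem require only $f,g\in L^{1}_{\mu}(\mathbb{R}) \cap L^{2}_{\mu}(\mathbb{R})$, and here we simply take both arguments equal to the given $f$.

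With $g=f$, the integrand on the left-hand side of \eqref{6.11} becomes $\Phi_{\psi}^{D,M}f(t,x)\,\overline{\Phi_{\psi}^{D,M}f(t,x)}=\bigl|\Phi_{\psi}^{D,M}f(t,x)\bigr|^{2}$, so the double integral over $(0,\infty)\times\mathbb{R}$ with respect to $|x|^{2\mu+1}\,dx\,\tfrac{dt}{t}$ is exactly the left-hand side of \eqref{6.12}. On the right-hand side, the same substitution gives $C_{\psi}^{M}\int_{\mathbb{R}}f(x)\overline{f(x)}\,|x|^{2\mu+1}\,dx=C_{\psi}^{M}\int_{\mathbb{R}}|f(x)|^{2}\,|x|^{2\mu+1}\,dx$, which by the very definition of the norm on $L^{2}_{\mu}(\mathbb{R})$ equals $C_{\psi}^{M}\,\|f\|_{\mu,2}^{2}$. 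Equating the two sides then yields the claimed formula.

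There is no genuine obstacle here beyond recording that the admissibility condition \eqref{6.1} guarantees $0<C_{\psi}^{M}<\infty$, so the constant on the right is finite and nonzero, and that the integrability afforded by $f\in L^{1}_{\mu}(\mathbb{R}) \cap L^{2}_{\mu}(\mathbb{R})$ is precisely what validates the interchange of integrations used in the parent theorem. Both of these points are already secured by the hypotheses inherited from \eqref{6.11}, so the corollary follows at once.
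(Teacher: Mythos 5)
Your proposal is correct and matches the paper's intent exactly: the paper states this corollary as an immediate consequence of the orthogonality relation \eqref{6.11}, obtained by setting $g=f$, which is precisely your argument. Nothing further is needed.
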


The following theorem ensures that the input signal may be recovered from the corresponding linear canonical Dunkl wavelet transform.
\begin{thm}
	Let $\psi$ a mother wavelet  in $L^{2}_{\mu}(\mathbb{R})$  related to the the continuous linear canonical Dunkl wavelet transform and
$f\in L^{1}_{\mu}(\mathbb{R}) \cap L^{2}_{\mu}(\mathbb{R})$.  Then $f$ can be reconstructed by the following formula
\begin{equation}
	f\left(  y\right)  =\frac{1}{C_{\psi}^{M}}\int_{0}^{\infty}\int
	_{\mathbb{R}}\Phi_{\psi}^{D,M}f\left(  t,x\right)  \psi
	_{t,x}^{M}\left(  y\right) |x|^{2\mu+1} dx  \frac
	{dt}{t} \label{6.13}%
\end{equation}
weakly in  $L^{2}_{\mu}(\mathbb{R})$.
\end{thm}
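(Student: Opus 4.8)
The plan is to prove the formula in its weak sense by testing the candidate reconstruction against an arbitrary element of a dense subspace and reducing the resulting expression to the orthogonality relation (\ref{6.11}). Denote by $F$ the right-hand side of (\ref{6.13}), understood as a weak ($L^2_\mu$-valued) integral, so that its defining property is
\[\langle F, g\rangle_{\mu} = \frac{1}{C_{\psi}^{M}}\int_{0}^{\infty}\int_{\mathbb{R}}\Phi_{\psi}^{D,M}f(t,x)\,\langle \psi_{t,x}^{M}, g\rangle_{\mu}\,|x|^{2\mu+1}\,dx\,\frac{dt}{t}\]
for every test function $g$. Asserting that $f$ is reconstructed weakly in $L^2_\mu(\mathbb{R})$ amounts to showing $\langle F, g\rangle_{\mu} = \langle f, g\rangle_{\mu}$ for all $g$ in the dense subspace $L^1_\mu(\mathbb{R})\cap L^2_\mu(\mathbb{R})$. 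I would therefore fix such a $g$ and evaluate the pairing above.

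The core computation is short. By the definition (\ref{6.5}) of the wavelet transform together with conjugation, the inner pairing is
\[\langle \psi_{t,x}^{M}, g\rangle_{\mu} = \int_{\mathbb{R}}\psi_{t,x}^{M}(y)\,\overline{g(y)}\,|y|^{2\mu+1}\,dy = \overline{\Phi_{\psi}^{D,M}g(t,x)}.\]
Substituting this into the defining identity for $\langle F,g\rangle_{\mu}$ turns the double integral into exactly $\tfrac{1}{C_{\psi}^{M}}$ times the left-hand side of the orthogonality relation (\ref{6.11}) applied to the pair $(f,g)$. That theorem then gives $\langle F, g\rangle_{\mu} = \tfrac{1}{C_{\psi}^{M}}\,C_{\psi}^{M}\,\langle f, g\rangle_{\mu} = \langle f, g\rangle_{\mu}$. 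Since $g$ ranges over a dense subset of $L^2_\mu(\mathbb{R})$, this proves $F = f$ weakly, as claimed.

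The main point requiring care is the meaning and convergence of the integral defining $F$: because the measure $|x|^{2\mu+1}\,dx\,\tfrac{dt}{t}$ on $(0,\infty)\times\mathbb{R}$ is infinite, one cannot expect absolute integrability of the full triple integrand, and a naive application of Fubini's theorem is not justified. I would instead interpret (\ref{6.13}) as a weak integral and verify only that the scalar integral in the displayed pairing converges absolutely. This follows from the Cauchy--Schwarz inequality on the product measure space: the Plancherel-type identity (\ref{6.12}) shows that both $(t,x)\mapsto\Phi_{\psi}^{D,M}f(t,x)$ and $(t,x)\mapsto\Phi_{\psi}^{D,M}g(t,x)$ belong to $L^2$ for the measure $|x|^{2\mu+1}\,dx\,\tfrac{dt}{t}$, with norms $\sqrt{C_{\psi}^{M}}\,\|f\|_{\mu,2}$ and $\sqrt{C_{\psi}^{M}}\,\|g\|_{\mu,2}$ respectively. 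Hence their product is integrable and the pairing integral is bounded by $C_{\psi}^{M}\,\|f\|_{\mu,2}\,\|g\|_{\mu,2}$, which simultaneously makes $\langle F,\cdot\rangle_{\mu}$ a well-defined bounded conjugate-linear functional and legitimizes the manipulations above.
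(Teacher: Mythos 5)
Your proposal is correct and follows essentially the same route as the paper: pair the candidate reconstruction against a test function $g$, exchange the order of integration to recognize the inner integral as $\overline{\Phi_{\psi}^{D,M}g(t,x)}$, and invoke the orthogonality relation (\ref{6.11}) to conclude $\langle F,g\rangle_{\mu}=\langle f,g\rangle_{\mu}$. Your additional remarks on interpreting the right-hand side as a weak integral and justifying absolute convergence of the pairing via Cauchy--Schwarz and (\ref{6.12}) are a welcome refinement, but they do not change the substance of the argument.
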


\begin{proof} 	Let $\psi$ a mother wavelet  in $L^{2}_{\mu}(\mathbb{R})$  related to the the continuous linear canonical Dunkl wavelet transform.
 Then for all 	$f\in L^{1}_{\mu}(\mathbb{R}) \cap L^{2}_{\mu}(\mathbb{R})$, we have
\begin{align*}
	& \!\!\!\!\!\!\!\!\!\!\!\! \int_{\mathbb{R}}\left(  \int_{0}^{\infty}\int_{\mathbb{R}}\Phi_{\psi}^{D,M}f\left(  t,x\right)  \psi_{t,x}^{M%
	}\left(  y\right)  |x|^{2\mu+1} dx  \frac{dt}{t}\right)
	\overline{g\left(  y\right)  }|y|^{2\mu+1} dy \\
	&  =\int_{0}^{\infty}\left(  \int_{\mathbb{R}}\Phi_{\psi}^{D,M}f\left(  t,x\right)  \left(  \int_{\mathbb{R}}\overline
	{g\left(  y\right)  \psi_{t,x}^{M}\left(  y\right)  }|y|^{2\mu+1} dy \right)  |x|^{2\mu+1} dx \right)  \frac{dt}{t}
	\\
	&  =\int_{0}^{\infty}\int_{\mathbb{R}}\Phi_{\psi}^{D,M}f\left(  t,x\right)  \overline{\Phi_{\psi}^{D,M}g\left(
		t,x\right)} |x|^{2\mu+1} dx  \frac{dt}{t}\\
	&  =C_{\psi}^{M}\int_{\mathbb{R}}f\left(  x\right)  \overline
	{g\left(  x\right)  }|x|^{2\mu+1} dx.
\end{align*}
Which completes the proof.
\end{proof}

\begin{thm}$\left(\text{Reproducing kernel}\right)$
	Let $\psi$ a mother wavelet  in $L^{2}_{\mu}(\mathbb{R})$  related to the the continuous linear canonical Dunkl wavelet transform  and $ M ^{\prime}=\left(  a^{\prime},
	b^{\prime}, c^{\prime}, d^{\prime} \right)  \in SL\left( 2, \mathbb{R}\right)  .$ Then the function 
\begin{equation}\label{kernelR}
		\mathcal{R}_{\psi}\left(  t,x,t^{\prime},x^{\prime}\right)  =\frac{1}{C_{\psi
		}^{ M }}\int_{\mathbb{R}}\psi_{t,x}^{ M }\left(  y\right)
	\overline{\psi_{t^{\prime},x^{\prime}}^{ M ^{\prime}}\left(  y\right)
	}|y|^{2\mu+1}dy
\end{equation}
	is point-wise bounded and  satisfies the following reproduction kernel formula
	\begin{equation}\label{reprokernelR}
	 \Phi_{\psi}^{D,M^{\prime}}f\left(  t^{\prime},x^{\prime}\right)
	=\int_{0}^{\infty}\int_{\mathbb{R}} \Phi_{\psi}^{D,M}f\left(  t,x\right)
	\mathcal{R}_{\psi}\left(  t,x,t^{\prime},x^{\prime}\right) |x|^{2\mu+1}dx \frac{dt}{t}.
   \end{equation}
\end{thm}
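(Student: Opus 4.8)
The plan is to handle the two assertions separately, obtaining the reproduction formula (\ref{reprokernelR}) by recognizing the kernel (\ref{kernelR}) as a linear canonical Dunkl wavelet transform and then invoking the orthogonality relation (\ref{6.11}).

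For the pointwise bound, I would observe that (\ref{kernelR}) is the inner product $\frac{1}{C_\psi^M}\langle \psi_{t,x}^M,\psi_{t',x'}^{M'}\rangle_\mu$. Applying the Cauchy--Schwarz inequality and then the norm estimate (\ref{6.10a}) to each factor yields $\left|\mathcal{R}_\psi(t,x,t',x')\right|\le \frac{1}{C_\psi^M}\|\psi_{t,x}^M\|_{\mu,2}\,\|\psi_{t',x'}^{M'}\|_{\mu,2}\le \frac{\|\psi\|_{\mu,2}^2}{C_\psi^M}$, a constant independent of $(t,x,t',x')$, which is the claimed boundedness.

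For the reproduction formula, the key observation is the identity $\overline{\Phi_\psi^{D,M}h(t,x)}=C_\psi^M\,\mathcal{R}_\psi(t,x,t',x')$, where $h:=\psi_{t',x'}^{M'}$; this follows by conjugating the definition (\ref{6.5}) of the wavelet transform of $h$ and comparing with (\ref{kernelR}). Substituting $\mathcal{R}_\psi=\frac{1}{C_\psi^M}\overline{\Phi_\psi^{D,M}h}$ into the right-hand side of (\ref{reprokernelR}) turns the double integral into $\frac{1}{C_\psi^M}\int_0^\infty\int_{\mathbb{R}}\Phi_\psi^{D,M}f(t,x)\,\overline{\Phi_\psi^{D,M}h(t,x)}\,|x|^{2\mu+1}\,dx\,\frac{dt}{t}$. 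By the orthogonality relation (\ref{6.11}) applied to the pair $(f,h)$ this equals $\frac{1}{C_\psi^M}\cdot C_\psi^M\langle f,h\rangle_\mu=\langle f,\psi_{t',x'}^{M'}\rangle_\mu=\Phi_\psi^{D,M'}f(t',x')$, which is precisely (\ref{reprokernelR}).

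The main obstacle is the legitimacy of invoking (\ref{6.11}) with $g=h=\psi_{t',x'}^{M'}$, since that identity is stated for arguments in $L^1_\mu(\mathbb{R})\cap L^2_\mu(\mathbb{R})$. The $L^2_\mu$-membership of $h$ is guaranteed by (\ref{6.10a}), while its $L^1_\mu$-membership rests on the regularity of $\psi$ together with the compact support in $z$ of $\mathcal{W}_\mu$ that defines the translation. Moreover, the identification of $\mathcal{R}_\psi(\cdot,\cdot,t',x')$ with $\frac{1}{C_\psi^M}\overline{\Phi_\psi^{D,M}h}$ places it, via the Plancherel-type identity (\ref{6.12}), in $L^2$ of the measure $|x|^{2\mu+1}\,dx\,\frac{dt}{t}$; since $\Phi_\psi^{D,M}f$ lies in the same space, their product is integrable by Cauchy--Schwarz, so all the integral manipulations above are justified and (\ref{6.11}) applies verbatim.
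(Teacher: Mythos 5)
Your proof is correct. The boundedness argument is identical to the paper's: Cauchy--Schwarz on the inner product defining $\mathcal{R}_{\psi}$ followed by the norm estimate (\ref{6.10a}). For the reproduction formula the paper proceeds in the opposite direction: it starts from $\Phi_{\psi}^{D,M'}f(t',x')=\int_{\mathbb{R}}f(y)\overline{\psi_{t',x'}^{M'}(y)}\,|y|^{2\mu+1}dy$, substitutes the inversion formula (\ref{6.13}) for $f$, and interchanges the order of integration to make the kernel appear. You instead identify $\mathcal{R}_{\psi}(\cdot,\cdot,t',x')$ as $\frac{1}{C_{\psi}^{M}}\overline{\Phi_{\psi}^{D,M}\bigl(\psi_{t',x'}^{M'}\bigr)}$ and feed the pair $\bigl(f,\psi_{t',x'}^{M'}\bigr)$ into the orthogonality relation (\ref{6.11}). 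Since the paper's inversion formula is itself proved weakly in $L^{2}_{\mu}(\mathbb{R})$ by testing against an arbitrary $g$ via (\ref{6.11}), the two routes rest on the same underlying identity; yours simply makes the choice $g=\psi_{t',x'}^{M'}$ explicit rather than routing through the weak reconstruction statement. This buys you two things: you avoid using a formula that holds only weakly as if it were a pointwise identity inside an integral, and the Plancherel identity (\ref{6.12}) applied to $\Phi_{\psi}^{D,M}\bigl(\psi_{t',x'}^{M'}\bigr)$ automatically justifies the absolute convergence of the double integral, a point the paper's Fubini step leaves implicit. Your residual worry about the $L^{1}_{\mu}\cap L^{2}_{\mu}$ hypothesis on $\psi_{t',x'}^{M'}$ is legitimate but applies equally to the paper's own argument, and your resolution via the compact support of $\mathcal{W}_{\mu}$ is reasonable.
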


\begin{proof}
		According to the Cauchy-Schwartz inequality, we obtain the point-wise boundedness  of the kernel $\mathcal{R}_{\psi}$ (\ref{kernelR})
	\begin{align*}
		\left\vert \mathcal{R}_{\psi}\left(  t,x,t^{\prime},x^{\prime}\right)
		\right\vert  &  \leq\frac{1}{C_{\psi}^{ M }}\int_{0}^{\infty
		}\left\vert \psi_{t,x}^{ M }\left(  y\right)  \overline{\psi
			_{t^{\prime},x^{\prime}}^{ M ^{\prime}}\left(  y\right)  }\right\vert
		|y|^{2\mu+1}dy\\
		&  \leq\frac{1}{C_{\psi}^{ M }}\left\Vert \psi_{t,x}^{ M %
		}\right\Vert _{2,\mu}\left\Vert \psi_{t^{\prime},x^{\prime}}^{ M %
			^{\prime}}\right\Vert _{2,\mu}\\
		&  \leq\frac{1}{C_{\psi}^{ M }}\left\Vert \psi\right\Vert _{2,\mu
		}^{2}.
	\end{align*}
 Now, using the inversion formula (\ref{6.13}), we get the reproduction kernel formula (\ref{reprokernelR})
	\begin{align*}
		& \!\!\!\!\!\!\!\!\!\!\!\!\!\!  \Phi_{\psi}^{D,M^{\prime}}f\left(  t^{\prime},x^{\prime
		}\right) \\
		&  =\int_{\mathbb{R}}f\left(  y\right)  \overline{\psi_{t^{\prime},x^{\prime
			}}^{ M ^{\prime}}\left(  y\right)  }|y|^{2\mu+1}dy \\
		&  =\int_{\mathbb{R}}\left[  \frac{1}{C_{\psi}^{ M }}\int
		_{0}^{\infty}\int_{\mathbb{R}} \Phi_{\psi}^{D,M}f\left(
		t,x\right)  \psi_{t,x}^{ M }\left(  y\right)  |x|^{2\mu+1}dx \frac{dt}{t}\right]  \overline{\psi_{t^{\prime},x^{\prime}%
			}^{ M ^{\prime}}\left(  y\right)  }|y|^{2\mu+1}dy \\
		&  =\frac{1}{C_{\psi}^{ M }}\int_{0}^{\infty}\left[  \int_{\mathbb{R}} \Phi_{\psi}^{D,M}f\left(  t,x\right)  \int_{\mathbb{R}}\psi_{t,x}^{ M }\left(  y\right)  \overline
		{\psi_{t^{\prime},x^{\prime}}^{ M ^{\prime}}\left(  y\right)
		}|y|^{2\mu+1}dy  \right] |x|^{2\mu+1}dx
		\frac{dt}{t}\\
		&  =\int_{0}^{\infty}\int_{\mathbb{R}}\Phi_{\psi}^{D,M}f\left(  t,x\right)  \mathcal{R}_{\psi}\left(  t,x,t^{\prime},x^{\prime
		}\right)  |x|^{2\mu+1}dx  \frac{dt}{t}.
	\end{align*}
	Which completes the proof.
\end{proof}

\section{Applications}\label{sec5}
This section is devoted to elaborate some uncertainty inequalities for the continuous linear  canonical Dunkl wavelet transform.
We begin by establishing the following   important lemma, which will be
necessary for us  in the formulation of the main results in this part.
\begin{lem}
	\label{lem1} Let $\psi \in L^{p}_{\mu}(\mathbb{R})$ such that $1\leq p<+\infty$ and $f\in L^{1}_{\mu}(\mathbb{R})$. Then, we have
	\begin{equation}
		\left\Vert \Phi_{\psi}^{D,M}f\left(  t,.\right)  \right\Vert
		_{\mu,p}\leq t^{\left(  \mu +1\right)  \left(  \frac{2}{p}-1\right)
		}\left\Vert \psi\right\Vert _{\mu,p}\left\Vert f\right\Vert _{\mu,1}.
		\label{7.1}%
	\end{equation}
	
\end{lem}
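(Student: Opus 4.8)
The plan is to reduce the estimate to a Young-type inequality for the classical Dunkl convolution combined with the scaling law of the dilation operator $\delta_t$ under the $L^p_\mu$-norm. The exponent $(\mu+1)(\frac{2}{p}-1)$ in the statement is exactly the power by which $\delta_t$ rescales that norm, which is what makes the choice of method transparent.

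First I would pass to the convolution representation. By relation (\ref{6.8}) the factor $e^{-\frac{ia}{2b}x^2}$ has modulus one, so $|\Phi_{\psi}^{D,M}f(t,x)|=|\Phi_{\psi}^{D}f(t,x)|$, and hence $\|\Phi_{\psi}^{D,M}f(t,\cdot)\|_{\mu,p}=\|\Phi_{\psi}^{D}f(t,\cdot)\|_{\mu,p}$. Using (\ref{6.9}) and (\ref{6.10}) this equals $\|f_{a,b}\ast\delta_t\overline{\psi}\|_{\mu,p}$, where $f_{a,b}=e^{\frac{ia}{2b}\xi^2}f$ and $\ast$ is the classical Dunkl convolution.

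Next I would apply the Young inequality in the classical Dunkl setting (the exact analogue of Proposition \ref{young1}, whose Hölder argument gives the clean form $\|u\ast v\|_{\mu,r}\le\|u\|_{\mu,p'}\|v\|_{\mu,q'}$) with the exponent triple $(p',q',r)=(1,p,p)$, which satisfies $\frac{1}{1}+\frac{1}{p}-1=\frac{1}{p}$. This yields
$$\|f_{a,b}\ast\delta_t\overline{\psi}\|_{\mu,p}\le\|f_{a,b}\|_{\mu,1}\,\|\delta_t\overline{\psi}\|_{\mu,p}.$$
The first factor is harmless since $|f_{a,b}|=|f|$ gives $\|f_{a,b}\|_{\mu,1}=\|f\|_{\mu,1}$, and likewise $\|\delta_t\overline{\psi}\|_{\mu,p}=\|\delta_t\psi\|_{\mu,p}$.

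Finally I would compute the dilation scaling. Writing $\delta_t\psi(x)=t^{-(\mu+1)}\psi(x/t)$ and substituting $x=tu$, so that $|x|^{2\mu+1}\,dx=t^{2\mu+2}|u|^{2\mu+1}\,du$ for $t>0$, one gets $\|\delta_t\psi\|_{\mu,p}^p=t^{-(\mu+1)p}\,t^{2\mu+2}\,\|\psi\|_{\mu,p}^p$, hence $\|\delta_t\psi\|_{\mu,p}=t^{(\mu+1)(\frac{2}{p}-1)}\|\psi\|_{\mu,p}$. Substituting this into the previous display gives precisely the bound (\ref{7.1}). The one point requiring care is the first step: Proposition \ref{young1} is phrased for the linear canonical Dunkl convolution $\star$, whereas the representation here involves the classical Dunkl convolution $\ast$, so I would either invoke the well-known Young inequality in the classical Dunkl setting or observe that the identical Hölder computation of Proposition \ref{young1} applies verbatim to $\ast$; everything else is a routine change of variables.
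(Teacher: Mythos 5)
Your proposal is correct and follows essentially the same route as the paper's proof: reduce to the classical Dunkl convolution representation $f_{a,b}\ast\delta_t\overline{\psi}$ via (\ref{6.8})--(\ref{6.10}), apply Young's inequality with the exponent triple $(1,p,p)$, and compute the scaling of $\delta_t$ under the $L^{p}_{\mu}$-norm by the change of variables $x=tu$. If anything, your treatment of the unimodular phase factors (replacing $f_{a,b}$ by $f$ only at the level of the $L^{1}_{\mu}$-norm after Young, rather than inside the convolution itself) is slightly more careful than the paper's passage to $\left\Vert f\ast_{\mu}\delta_{t}\overline{\psi}\right\Vert_{\mu,p}$.
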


\begin{proof}
Let $\psi \in L^{p}_{\mu}(\mathbb{R})$ such that $1\leq p<+\infty$ and $f\in L^{1}_{\mu}(\mathbb{R})$.
	According to  relations (\ref{6.8}) and (\ref{6.9}), we get
	\begin{align*}
		\left\Vert \Phi_{\psi}^{D,M}f\left(  t,.\right)  \right\Vert
		_{\mu,p}  &  =\left\Vert e^{-\frac{ia}{2b}x^{2}}\Phi_{\psi}^{D}
		f_{a,b}\left(  t,.\right)  \right\Vert _{\mu,p}\\
		&  =\left\Vert e^{-\frac{ia}{2b}x^{2}}f_{a,b}\ast\delta_{t}
		\overline{\psi}\left(  x\right)  \right\Vert _{\mu,p}%
	\end{align*}
	where $f_{a,b}$ is given by the relation (\ref{6.10}).\\
	Therefore 
		\begin{equation}
		\left\Vert \Phi_{\psi}^{D,M}f\left(  t,.\right)  \right\Vert
		_{\mu,p}=\left\Vert f\ast_{\mu}\delta_{t}\overline{\psi}\left(
		x\right)  \right\Vert _{\mu,p}. \label{7.2}%
	\end{equation}
	According to the Young's inequality, we obtain
	\begin{equation}
		\left\Vert \Phi_{\psi}^{D,M}f\left(  t,.\right)  \right\Vert
		_{\mu,p}\leq\left\Vert f\right\Vert _{\mu,1}\left\Vert \delta_{t}%
		\psi\right\Vert _{\mu,p}. \label{7.3}%
	\end{equation}
	On the other hand, we have
	\begin{align*}
		\left\Vert \delta_{t}\psi\right\Vert _{\mu,p}^{p}  &  =\int_{\mathbb{R}}|\delta_{t}\psi(x)|^{p} |x|^{2\mu+1} dx\\
		&  =\int_{\mathbb{R}}
		\frac{1}{t^{p\left(  \mu+1\right)  }}|\psi(\frac{x}{t})|^{p}x^{2\mu
			+1}dx\\
		&  =\frac{1}{t^{\left(
				p-2\right)  \left(  \mu+1\right)  }}\int_{\mathbb{R}}|\psi(u)|^{p}%
		u^{2\mu+1}du\\
		&  =t^{\left(  2-p\right)  \left(  \mu+1\right)  }\Vert\psi\Vert_{\mu,p}^{p}.
	\end{align*}
	Thus, we get
	\begin{equation}
		\left\Vert \delta_{t}\psi\right\Vert _{\mu,p}=t^{\left(  \mu
			+1\right)  \left(  \frac{2}{p}-1\right)  }\left\Vert \psi\right\Vert
		_{\mu,p}. \label{7.4}%
	\end{equation}
 Finally, combining  the relations (\ref{7.3}) and (\ref{7.4}) we obtain the desired inequality.
\end{proof}

\begin{lem}
	\label{lem2} Let $p,q\in [1,\infty)$, such that $\frac{1}{p}+\frac{1}{q}=1$. If
	$\psi\in L^{q}_{\mu}(\mathbb{R})$ and
	$f\in L^{p}_{\mu}(\mathbb{R})$ then $\Phi_{\psi}^{D,M}f\left( t,.\right)  \in L^{\infty}_{\mu}(\mathbb{R})$ and we have
	\begin{equation}
		\left\Vert \Phi_{\psi}^{D,M}f\left(  t,.\right)  \right\Vert
		_{\mu,\infty}\leq t^{\left(  \mu+1\right)  \left(  \frac{2}{q}-1\right)
		}\left\Vert \psi\right\Vert _{\mu,q}\left\Vert f\right\Vert _{\mu,p}.
		\label{7.5}%
	\end{equation}
	
\end{lem}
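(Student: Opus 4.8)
The plan is to mirror the argument of Lemma \ref{lem1}, replacing the conjugate-type pair of exponents used there by the H\"older-conjugate pair $(p,q)$, so that the target space becomes $L^{\infty}_{\mu}(\mathbb{R})$ in place of $L^{p}_{\mu}(\mathbb{R})$. All the structural reductions are already in place; only the exponent bookkeeping and the endpoint case of Young's inequality need to be adjusted.

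First I would use the factorisation (\ref{6.8}), which reads $\Phi_{\psi}^{D,M}f(t,x)=e^{-\frac{ia}{2b}x^{2}}\Phi_{\psi}^{D}f_{a,b}(t,x)$, and observe that the prefactor $e^{-\frac{ia}{2b}x^{2}}$ has modulus one, so it drops out of any $L^{\infty}_{\mu}$ norm; hence $\|\Phi_{\psi}^{D,M}f(t,\cdot)\|_{\mu,\infty}=\|\Phi_{\psi}^{D}f_{a,b}(t,\cdot)\|_{\mu,\infty}$. Next, using the convolution representation (\ref{6.9}) together with (\ref{6.10}) and the fact that $|f_{a,b}|=|f|$, I would rewrite this quantity as $\|f\ast_{\mu}\delta_{t}\overline{\psi}\|_{\mu,\infty}$, exactly as in the step leading to (\ref{7.2}).

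The key step is then to apply Young's inequality for the classical Dunkl convolution with the three exponents $p$, $q$ and $\infty$: since $\frac{1}{p}+\frac{1}{q}-1=0=\frac{1}{\infty}$, the convolution of an $L^{p}_{\mu}$ function with an $L^{q}_{\mu}$ function lies in $L^{\infty}_{\mu}(\mathbb{R})$ and obeys $\|f\ast_{\mu}\delta_{t}\overline{\psi}\|_{\mu,\infty}\leq\|f\|_{\mu,p}\|\delta_{t}\psi\|_{\mu,q}$; this simultaneously delivers the asserted membership $\Phi_{\psi}^{D,M}f(t,\cdot)\in L^{\infty}_{\mu}(\mathbb{R})$. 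Finally I would insert the dilation scaling identity (\ref{7.4}) with $p$ replaced by $q$, namely $\|\delta_{t}\psi\|_{\mu,q}=t^{(\mu+1)(2/q-1)}\|\psi\|_{\mu,q}$, whose justification is the same one-line change of variables $u=x/t$ already carried out in Lemma \ref{lem1}. Combining the last two displays yields (\ref{7.5}).

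The argument is essentially routine given the machinery assembled above; the one point deserving care is the $L^{\infty}$ endpoint of Young's inequality, which is the borderline case $r=\infty$ of the exponent relation. Here it reduces directly to a H\"older estimate, $|f\ast_{\mu}\delta_{t}\overline{\psi}(x)|\leq\|f\|_{\mu,p}\,\|\mathcal{T}_{x}^{\mu}\delta_{t}\overline{\psi}\|_{\mu,q}\leq\|f\|_{\mu,p}\,\|\delta_{t}\psi\|_{\mu,q}$, using the $L^{q}_{\mu}$-contractivity of the Dunkl translation $\mathcal{T}_{x}^{\mu}$. I would also record the harmless degeneracy that $p,q\in[1,\infty)$ with $\frac{1}{p}+\frac{1}{q}=1$ in fact forces $1<p,q<\infty$, so no genuine endpoint in $p$ or $q$ arises.
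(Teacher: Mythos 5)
Your proposal is correct and follows essentially the same route as the paper: reduce via the factorisation $\|\Phi_{\psi}^{D,M}f(t,\cdot)\|_{\mu,\infty}=\|f\ast_{\mu}\delta_{t}\overline{\psi}\|_{\mu,\infty}$, apply Young's inequality at the endpoint $r=\infty$, and insert the dilation scaling $\|\delta_{t}\psi\|_{\mu,q}=t^{(\mu+1)(2/q-1)}\|\psi\|_{\mu,q}$. Your extra remarks, spelling out the H\"older-plus-translation-contractivity justification of the $L^{\infty}$ endpoint and noting that the hypotheses force $1<p,q<\infty$, are sound details the paper leaves implicit.
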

\begin{proof}
Let $p,q\in [1,\infty)$, such that $\frac{1}{p}+\frac{1}{q}=1$, $\psi\in L^{q}_{\mu}(\mathbb{R})$ and $f\in L^{p}_{\mu}(\mathbb{R})$. According to relations (\ref{7.3})
	and (\ref{7.4}) and the Young's inequality, we get
	\begin{align*}
		\left\Vert \Phi_{\psi}^{D,M}f\left(  t,.\right)  \right\Vert
		_{\mu,\infty}  &  =\left\Vert f\ast_{\mu}\delta_{t}\overline{\psi
		}\left(  x\right)  \right\Vert _{\mu,\infty}\\
		&  \leq\left\Vert f\right\Vert _{\mu,p}\left\Vert \delta_{t}%
		\overline{\psi}\right\Vert _{\mu,q}\\
		&  = t^{\left(  \mu+1\right)  \left(  \frac{2}{q}-1\right)  }\left\Vert
		f\right\Vert _{\mu,p}\left\Vert \psi\right\Vert _{\mu,q}.
	\end{align*}
	\end{proof}

\begin{thm}
	Let $\psi,f\in L^{2}_{\mu}(\mathbb{R})$ such that
	$\left\Vert \psi\right\Vert _{\mu,2}=\left\Vert f\right\Vert_{\mu,2}=1$
	and $\Omega\subset\mathbb{R}_{+}\times\mathbb{R}$ be a measurable subset satisfying
	\[
|\Omega|:=\int\int_{\Omega} |x|^{2\mu +1}dx \frac{dt}%
	{t}<+\infty
	\]
	and
	\[
	\forall\varepsilon>0,\quad\ \int\int_{\Omega}\left\vert  \Phi_{\psi}^{D,M}f\left(  t,x\right)  \right\vert ^{2}|x|^{2\mu +1}dx\frac
	{dt}{t}\geq1-\varepsilon.
	\]
	Then, we have
	\[
	\forall\varepsilon>0,\quad |\Omega| \geq1-\varepsilon.
	\]
	
\end{thm}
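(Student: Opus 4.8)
The plan is to bound the energy of $\Phi_{\psi}^{D,M}f$ on $\Omega$ by combining the sup-norm estimate from Lemma \ref{lem2} with the measure $|\Omega|$, and then to invoke the Plancherel-type identity (\ref{6.12}) to normalize. The key observation is that the hypothesis gives a lower bound $1-\varepsilon$ for the localized energy, while an upper bound in terms of $|\Omega|$ can be produced using the pointwise control of the wavelet transform. Since $\psi, f \in L^2_\mu(\mathbb{R})$ with unit norms, Lemma \ref{lem2} applied with $p=q=2$ yields
\begin{equation*}
\left\Vert \Phi_{\psi}^{D,M}f(t,\cdot)\right\Vert_{\mu,\infty} \leq \left\Vert \psi\right\Vert_{\mu,2}\left\Vert f\right\Vert_{\mu,2} = 1,
\end{equation*}
since the exponent $(\mu+1)(2/q-1)$ vanishes when $q=2$. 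This provides the uniform pointwise bound $|\Phi_{\psi}^{D,M}f(t,x)| \leq 1$ that is the crux of the argument.

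First I would estimate the localized energy from above. Using the pointwise bound just obtained,
\begin{align*}
\int\int_{\Omega}\left\vert \Phi_{\psi}^{D,M}f(t,x)\right\vert^{2}|x|^{2\mu+1}dx\frac{dt}{t}
&\leq \left(\sup_{(t,x)}\left\vert \Phi_{\psi}^{D,M}f(t,x)\right\vert^{2}\right)\int\int_{\Omega}|x|^{2\mu+1}dx\frac{dt}{t}\\
&\leq |\Omega|.
\end{align*}
Next I would combine this upper bound with the hypothesized lower bound on the same integral: since the localized energy is at least $1-\varepsilon$ and at most $|\Omega|$, chaining the two inequalities yields $1-\varepsilon \leq |\Omega|$ for every $\varepsilon>0$, which is exactly the claimed conclusion.

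The only point requiring care is verifying that the sup-norm estimate of Lemma \ref{lem2} applies verbatim here. Lemma \ref{lem2} is stated for $p,q \in [1,\infty)$ with $\tfrac{1}{p}+\tfrac{1}{q}=1$; taking $p=q=2$ is the self-dual case, and one must note that the normalizing power of $t$ collapses to $t^{0}=1$, leaving the clean bound $\|\Phi_{\psi}^{D,M}f(t,\cdot)\|_{\mu,\infty}\leq \|\psi\|_{\mu,2}\|f\|_{\mu,2}$. I expect \textbf{this verification to be the main (and essentially only) obstacle}, since the rest of the argument is a direct two-line sandwich of inequalities; everything hinges on the fact that the unit normalization of $\psi$ and $f$ forces the pointwise supremum to be bounded by $1$, which is what converts the measure estimate into the desired inequality. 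No subtlety arises from the measurability of $\Omega$ beyond the stated finiteness of $|\Omega|$.
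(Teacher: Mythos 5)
Your proof is correct and follows essentially the same route as the paper: both arguments reduce to the pointwise bound $\left\vert \Phi_{\psi}^{D,M}f(t,x)\right\vert \leq \left\Vert f\right\Vert_{\mu,2}\left\Vert \psi\right\Vert_{\mu,2}=1$ followed by the same two-line sandwich between the hypothesis and $|\Omega|$. The only cosmetic difference is that the paper obtains the pointwise bound directly from the Cauchy--Schwarz inequality together with the norm estimate (\ref{6.10a}) for $\psi_{t,x}^{M}$, whereas you invoke Lemma \ref{lem2} with $p=q=2$; both yield the same inequality (up to the usual almost-everywhere caveat attached to the essential supremum, which is harmless for the integral estimate).
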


\begin{proof}
	Let $\psi,f\in L^{2}_{\mu}(\mathbb{R})$ such that
$\left\Vert \psi\right\Vert _{\mu,2}=\left\Vert f\right\Vert_{\mu,2}=1$.  We apply the Cauchy-Schwartz inequality, for all $t>0$ and $x\in \mathbb{R}$, we
	obtain
	\begin{align*}
		\left\vert \Phi_{\psi}^{D,M}f\left(  t,x\right)  \right\vert  &
		\leq\int_{\mathbb{R}}\left\vert f\left(  z\right)  \right\vert \left\vert
		\overline{\psi_{t,x}\left(  z\right)  }\right\vert |z|^{2\mu +1}dz
		\\
		&  \leq\left(  \int_{\mathbb{R}}\left\vert f\left(  z\right)  \right\vert
		^{2}|z|^{2\mu +1}dz \right)  ^{\frac{1}{2}}\left(  \int
		_{\mathbb{R}}\left\vert \overline{\psi_{t,x}\left(  z\right)  }\right\vert
		^{2}|z|^{2\mu +1}dz\right)  ^{\frac{1}{2}}\\
		&  \leq\left\Vert f\right\Vert _{\mu,2}\left\Vert \overline{\psi_{t,x}%
		}\right\Vert _{\mu,2}.
	\end{align*}
	According to relation (\ref{6.10a}), we get
	\begin{equation}
		\forall t>0,\forall x\in\mathbb{R},\quad \left\vert \Phi_{\psi}^{D,M}f\left(  t,x\right)  \right\vert \leq\left\Vert f\right\Vert _{2,\alpha
		}\left\Vert \psi\right\Vert _{\mu,2}=1. \label{7.9}%
	\end{equation}
	Therefore
	\[
	1-\varepsilon\leq\int\int_{\Omega}\left\vert \Phi_{\psi}^{D,M}f\left(  t,x\right)  \right\vert ^{2}|x|^{2\mu +1}dx \frac
	{dt}{t}\leq\int\int_{\Omega} |x|^{2\mu +1}dx \frac{dt}{t}.
	\]
	Which completes the proof.
\end{proof}

\begin{thm}
	Let $\psi$ and $\varphi$ two wavelets such that $0<C_{\psi}^{M},C_{\varphi}^{M}<+\infty.$ Then for all $p\in\left[  1,\infty
	\right)  $ and $f,g\in  L^{2}_{\mu}(\mathbb{R})$,
	we have
	\begin{align}
		& \!\!\!\!\!\!\!\!\!\!\!\!\! \left(  \int_{0}^{\infty}\int_{\mathbb{R}}\left\vert \Phi_{\psi}^{D,M}f\left(  t,x\right)  \Phi_{\varphi}^{D,M}g\left(
		t,x\right)  \right\vert ^{p}|x|^{2\mu +1}dx  \frac{dt}{t}\right)
		^{\frac{1}{p}}\label{7.10}\\
		&  \leq\left(  C_{\psi}^{ M }C_{\varphi}^{ M }\right)
		^{\frac{1}{2p}}\left(  \left\Vert \psi\right\Vert _{\mu,2}\left\Vert
		\varphi\right\Vert _{\mu,2}\right)  ^{\frac{p-1}{p}}\left\Vert f\right\Vert
		_{\mu,2}\left\Vert g\right\Vert _{\mu,2}.\nonumber
	\end{align}
	
\end{thm}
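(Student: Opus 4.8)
The plan is to interpolate between the uniform pointwise bound on the wavelet transform and the Plancherel-type identity of Corollary~(\ref{6.12}). First I would record the two ingredients. On the one hand, exactly the Cauchy--Schwarz computation carried out in the proof of the preceding theorem, together with relation~(\ref{6.10a}), yields the pointwise estimates
\begin{equation*}
\left\vert \Phi_{\psi}^{D,M}f\left(t,x\right)\right\vert \leq \left\Vert f\right\Vert_{\mu,2}\left\Vert \psi\right\Vert_{\mu,2}, \qquad \left\vert \Phi_{\varphi}^{D,M}g\left(t,x\right)\right\vert \leq \left\Vert g\right\Vert_{\mu,2}\left\Vert \varphi\right\Vert_{\mu,2},
\end{equation*}
valid for all $t>0$ and $x\in\mathbb{R}$. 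On the other hand, Corollary~(\ref{6.12}) gives $\int_{0}^{\infty}\!\int_{\mathbb{R}}\vert \Phi_{\psi}^{D,M}f(t,x)\vert^{2}|x|^{2\mu+1}dx\,\frac{dt}{t}=C_{\psi}^{M}\Vert f\Vert_{\mu,2}^{2}$, and the analogous identity for the pair $(\varphi,g)$ with constant $C_{\varphi}^{M}$.

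The key step is the splitting $|uv|^{p}=|uv|^{p-1}\,|uv|$ applied to $u=\Phi_{\psi}^{D,M}f(t,x)$ and $v=\Phi_{\varphi}^{D,M}g(t,x)$. I would bound the factor $|uv|^{p-1}$ by its supremum using the two pointwise estimates above, obtaining
\begin{equation*}
\left\vert \Phi_{\psi}^{D,M}f\,\Phi_{\varphi}^{D,M}g\right\vert^{p-1}\leq \left(\left\Vert f\right\Vert_{\mu,2}\left\Vert \psi\right\Vert_{\mu,2}\left\Vert g\right\Vert_{\mu,2}\left\Vert \varphi\right\Vert_{\mu,2}\right)^{p-1},
\end{equation*}
and keep the remaining factor $|uv|$ inside the integral. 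After pulling out this constant, what is left is $\int_{0}^{\infty}\!\int_{\mathbb{R}}\vert \Phi_{\psi}^{D,M}f\,\Phi_{\varphi}^{D,M}g\vert\,|x|^{2\mu+1}dx\,\frac{dt}{t}$, to which I apply the Cauchy--Schwarz inequality on $\mathbb{R}_{+}\times\mathbb{R}$ equipped with the measure $|x|^{2\mu+1}dx\,\frac{dt}{t}$. This separates the two transforms into their squared $L^{2}$-norms over this measure, which are evaluated exactly by the two Plancherel identities as $(C_{\psi}^{M})^{1/2}\Vert f\Vert_{\mu,2}$ and $(C_{\varphi}^{M})^{1/2}\Vert g\Vert_{\mu,2}$.

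Combining these bounds gives an estimate for the full integral; taking the $p$-th root and collecting powers completes the argument. The only bookkeeping to watch is that $\Vert f\Vert_{\mu,2}^{(p-1)/p}\cdot\Vert f\Vert_{\mu,2}^{1/p}=\Vert f\Vert_{\mu,2}$ (and likewise for $g$), so that $f$ and $g$ each enter to the first power, while $\psi$ and $\varphi$ contribute only through $(\Vert\psi\Vert_{\mu,2}\Vert\varphi\Vert_{\mu,2})^{(p-1)/p}$ and the admissibility constants through $(C_{\psi}^{M}C_{\varphi}^{M})^{1/(2p)}$, matching~(\ref{7.10}). There is no genuine obstacle beyond this power-counting, since both the $L^{\infty}$ and $L^{2}$ endpoint estimates are already in hand; the case $p=1$ is immediate, as the factor $|uv|^{p-1}$ disappears and one is left with the bare Cauchy--Schwarz step.
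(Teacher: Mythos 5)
Your proposal is correct and follows essentially the same route as the paper: the pointwise bound $\left\vert \Phi_{\psi}^{D,M}f(t,x)\right\vert \leq \left\Vert f\right\Vert_{\mu,2}\left\Vert \psi\right\Vert_{\mu,2}$ from (\ref{7.9}) is used to extract the factor $|uv|^{p-1}$, and the remaining integral of $|uv|$ is handled by Cauchy--Schwarz together with the Plancherel identity (\ref{6.12}). The power bookkeeping you describe is exactly the computation in the paper's proof.
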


\begin{proof}
According to  Cauchy-Schwartz inequality and the relation (\ref{6.12}), we obtain
	\begin{align*}
		&\!\!\!\!\!\!\!\!\!\!\!\!\!  \int_{0}^{\infty}\int_{\mathbb{R}}\left\vert \Phi_{\psi}^{D,M}f\left(  t,x\right)  \Phi_{\varphi}^{D,M}g\left(
		t,x\right)  \right\vert |x|^{2\mu +1}dx  \frac{dt}{t}\\
		&  \leq\left(  \int_{0}^{\infty}\int_{\mathbb{R}}\left\vert \Phi_{\psi}^{D,M}f\left(  t,x\right)  \right\vert ^{2}|x|^{2\mu +1}dx \frac{dt}{t}\right)  ^{\frac{1}{2}}\left(   \int_{0}^{\infty}\int_{\mathbb{R}}\left\vert \Phi_{\varphi}^{D,M}g\left(
		t,x\right)  \right\vert ^{2}|x|^{2\mu +1}dx  \frac{dt}{t}\right)
		^{\frac{1}{2}}\\
		&  \leq\sqrt{C_{\psi}^{M}}\sqrt{C_{\varphi}^{M}}\left\Vert
		f\right\Vert _{\mu,2}\left\Vert g\right\Vert _{\mu,2}.
	\end{align*}
	Therefore, according to the relation (\ref{7.9}), we get
	\begin{align*}
		& \!\!\!\!\!\! \left(  \int_{0}^{\infty}\int_{\mathbb{R}}\left\vert \Phi_{\psi}^{D,M}f\left(  t,x\right)  \Phi_{\varphi}^{D,M}g\left(
		t,x\right)  \right\vert ^{p} |x|^{2\mu +1}dx  \frac{dt}{t}\right)
		^{\frac{1}{p}}\\
		&  \leq\left(  \int_{0}^{\infty}\int_{\mathbb{R}}\left(  \left\Vert
		f\right\Vert _{\mu,2}\left\Vert \psi\right\Vert _{\mu,2}\left\Vert
		\varphi\right\Vert _{\mu,2}\left\Vert g\right\Vert _{\mu,2}\right)
		^{p-1}\left\vert \Phi_{\psi}^{D,M}f\left(  t,x\right)
		\Phi_{\varphi}^{D,M}g\left(  t,x\right)  \right\vert |x|^{2\mu +1}dx \frac{dt}{t}\right)  ^{\frac{1}{p}}\\
		&  \leq\left(  \left\Vert f\right\Vert _{\mu,2}\left\Vert \psi\right\Vert
		_{\mu,2}\left\Vert \varphi\right\Vert _{\mu,2}\left\Vert g\right\Vert
		_{\mu,2}\right)  ^{\frac{p-1}{p}}\left(  \int_{0}^{\infty}\int_{\mathbb{R}}\left\vert \Phi_{\psi}^{D,M}f\left(  t,x\right)
		\Phi_{\varphi}^{D,M}g\left(  t,x\right)  \right\vert|x|^{2\mu +1}dx \frac{dt}{t}\right)  ^{\frac{1}{p}}\\
		&  \leq\left(  \left\Vert f\right\Vert _{\mu,2}\left\Vert \psi\right\Vert
		_{\mu,2}\left\Vert \varphi\right\Vert _{\mu,2}\left\Vert g\right\Vert
		_{\mu,2}\right)  ^{\frac{p-1}{p}}\left(  \sqrt{C_{\psi}^{ M }}%
		\sqrt{C_{\varphi}^{ M }}\left\Vert f\right\Vert _{\mu,2}\left\Vert
		g\right\Vert _{\mu,2}\right)  ^{\frac{1}{p}}\\
		&  \leq\left(  C_{\psi}^{ M }C_{\varphi}^{ M }\right)
		^{\frac{1}{2p}}\left(  \left\Vert \psi\right\Vert _{\mu,2}\left\Vert
		\varphi\right\Vert _{\mu,2}\right)  ^{\frac{p-1}{p}}\left\Vert f\right\Vert
		_{\mu,2}\left\Vert g\right\Vert _{\mu,2}.
	\end{align*}
	
\end{proof}
Consequently, we have the following result.
\begin{cor}
	Let $\psi$ be a wavelet satisfying $0<C_{\psi}^{ M }<+\infty.$ Then
	for all $p\in\left[2,\infty\right) $ and $f\in  L^{2}_{\mu}(\mathbb{R}) $, we have
	\begin{equation}
		\left(  \int_{0}^{\infty}\int_{\mathbb{R}}\left\vert \Phi_{\psi}^{D,M}f\left(  t,x\right)  \right\vert ^{p}|x|^{2\mu +1}dx 
		 \frac{dt}{t}\right)  ^{\frac{1}{p}}\leq\left(  C_{\psi}%
		^{ M }\right)  ^{\frac{1}{p}}\left\Vert \psi\right\Vert _{\mu,2}^{\frac{p-2}{p}}\left\Vert f\right\Vert _{\mu,2}. \label{7.11}%
	\end{equation}
	\end{cor}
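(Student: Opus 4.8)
The plan is to deduce this corollary as a direct specialization of the preceding theorem, namely inequality (\ref{7.10}), by taking the two wavelets equal and the two functions equal. First I would set $\varphi = \psi$ and $g = f$ in (\ref{7.10}). Then the product appearing inside the integral collapses, since $\Phi_\psi^{D,M}f \cdot \Phi_\varphi^{D,M}g = \left(\Phi_\psi^{D,M}f\right)^2$, so the integrand on the left-hand side becomes $\left|\Phi_\psi^{D,M}f(t,x)\right|^{2p}$ when the theorem is applied with its exponent equal to $p$.

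The key observation is that the exponent in (\ref{7.10}) is a free parameter ranging over $[1,\infty)$. To obtain the $p$-th power demanded by (\ref{7.11}) rather than a $2p$-th power, I would apply the theorem not with the exponent $p$ but with the exponent $p/2$. The constraint $p/2 \geq 1$ required by the theorem then corresponds precisely to the hypothesis $p \in [2,\infty)$ of the corollary, which explains the restricted range; this matching of ranges is the one point that deserves explicit mention.

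Carrying out the substitution $\varphi = \psi$, $g = f$ and replacing the theorem's exponent by $p/2$, the left-hand side of (\ref{7.10}) becomes $\left(\int_0^\infty \int_{\mathbb{R}} \left|\Phi_\psi^{D,M}f(t,x)\right|^p |x|^{2\mu+1} dx \, \frac{dt}{t}\right)^{2/p}$, since $(p/2)^{-1} = 2/p$ and the squaring doubles the interior exponent from $p/2$ to $p$. On the right-hand side, the constant factor becomes $\left(C_\psi^M C_\psi^M\right)^{1/(2\cdot p/2)} = (C_\psi^M)^{2/p}$, the wavelet norm factor becomes $\left(\|\psi\|_{\mu,2}^2\right)^{(p-2)/p} = \|\psi\|_{\mu,2}^{2(p-2)/p}$ (using $(p/2-1)/(p/2) = (p-2)/p$), and the function factor becomes $\|f\|_{\mu,2}^2$. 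Taking square roots of both sides of the resulting inequality then yields (\ref{7.11}) exactly.

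Since the entire argument is a purely algebraic specialization of an already established inequality, there is no genuine analytic obstacle. The only step requiring a moment's care is the correct bookkeeping of exponents, in particular recognizing that $p/2$ is the value forced by the squaring, and confirming that the admissible range $p \geq 2$ is exactly what the theorem's constraint $p/2 \geq 1$ delivers.
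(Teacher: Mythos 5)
Your proposal is correct and is exactly the derivation the paper intends: the corollary is stated with no explicit proof beyond the word ``Consequently,'' and the only sensible route is the one you take, namely specializing inequality (\ref{7.10}) with $\varphi=\psi$, $g=f$ and exponent $p/2$, then taking square roots. Your exponent bookkeeping, including $(p/2-1)/(p/2)=(p-2)/p$ and the identification of the range $p\ge 2$ with the theorem's constraint $p/2\ge 1$, checks out.
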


\begin{thm}
	Let $\psi,f\in  L^{2}_{\mu}(\mathbb{R})$ such that
	$\left\Vert \psi\right\Vert _{2,\alpha}=\left\Vert f\right\Vert _{2,\alpha}=1$
	and $\Omega\subset\mathbb{R}_{+}\times\mathbb{R}$ be a measurable subset such that
	\[
	|\Omega| <\infty\quad\text{ and }\quad\int\int_{\Omega}\left\vert\Phi_{\psi}^{D,M}f\left(  t,x\right)  \right\vert ^{2}|x|^{2\mu +1}dx 
	 \frac{dt}{t}\geq1-\varepsilon,\quad\ \forall\varepsilon\in\left(0,1\right).
	\]
	Then for all $p\in (2,\infty)$, we have
	\[
	|\Omega|\geq\left(  1-\varepsilon\right)
	^{\frac{p}{p-2}}\left(  C_{\psi}^{ M }\right)  ^{\frac{2}{2-p}},\quad  \forall\varepsilon\in\left(0,1\right).
	\]
	
\end{thm}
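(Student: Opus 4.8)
The plan is to combine the global $L^{p}$ estimate of Corollary (\ref{7.11}) with a single application of H\"older's inequality on the localization set $\Omega$, equipped with the measure $d\gamma(t,x)=|x|^{2\mu+1}\,dx\,\frac{dt}{t}$. Since we are in the normalized case $\left\Vert \psi\right\Vert _{\mu,2}=\left\Vert f\right\Vert _{\mu,2}=1$, the bound (\ref{7.11}) reads, for every $p\in(2,\infty)$,
\[
\int_{0}^{\infty}\int_{\mathbb{R}}\left\vert \Phi_{\psi}^{D,M}f\left(t,x\right)\right\vert^{p}|x|^{2\mu+1}\,dx\,\frac{dt}{t}\leq C_{\psi}^{M}.
\]
In particular, restricting the integration from $\mathbb{R}_{+}\times\mathbb{R}$ to $\Omega$ only decreases the left-hand side, so that $\int\!\int_{\Omega}\left\vert \Phi_{\psi}^{D,M}f\right\vert^{p}\,d\gamma\leq C_{\psi}^{M}$, and the finiteness $|\Omega|<\infty$ guarantees that every integral over $\Omega$ below is well defined.

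Next I would bound the concentration integral from above via H\"older. Writing $\left\vert \Phi_{\psi}^{D,M}f\right\vert^{2}=\left\vert \Phi_{\psi}^{D,M}f\right\vert^{2}\cdot 1$ on $\Omega$ and applying H\"older's inequality with the conjugate exponents $\frac{p}{2}$ and $\frac{p}{p-2}$ (note $\frac{2}{p}+\frac{p-2}{p}=1$) gives
\[
\int\!\int_{\Omega}\left\vert \Phi_{\psi}^{D,M}f\right\vert^{2}\,d\gamma\leq\left(\int\!\int_{\Omega}\left\vert \Phi_{\psi}^{D,M}f\right\vert^{p}\,d\gamma\right)^{\frac{2}{p}}\left(\int\!\int_{\Omega}1\,d\gamma\right)^{\frac{p-2}{p}}\leq \left(C_{\psi}^{M}\right)^{\frac{2}{p}}\,|\Omega|^{\frac{p-2}{p}}.
\]
Combining this with the hypothesis $\int\!\int_{\Omega}\left\vert \Phi_{\psi}^{D,M}f\right\vert^{2}\,d\gamma\geq 1-\varepsilon$ then yields
\[
1-\varepsilon\leq\left(C_{\psi}^{M}\right)^{\frac{2}{p}}\,|\Omega|^{\frac{p-2}{p}}.
\]

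Finally I would isolate $|\Omega|$. Since $p>2$, raising both sides to the power $\frac{p}{p-2}>0$ preserves the inequality and produces
\[
|\Omega|\geq(1-\varepsilon)^{\frac{p}{p-2}}\left(C_{\psi}^{M}\right)^{-\frac{2}{p-2}}=(1-\varepsilon)^{\frac{p}{p-2}}\left(C_{\psi}^{M}\right)^{\frac{2}{2-p}},
\]
which is the claimed estimate, valid for all $\varepsilon\in(0,1)$ by the corresponding range in the hypothesis. There is no genuine analytic obstacle here: the only points demanding care are the exponent bookkeeping, namely verifying that $\frac{p}{2}$ and $\frac{p}{p-2}$ are H\"older conjugate and that $-\frac{2}{p-2}=\frac{2}{2-p}$, since the substantive work was already carried out in proving the $L^{p}$ bound (\ref{7.11}).
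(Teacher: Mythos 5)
Your argument is correct and follows essentially the same route as the paper: the same H\"older inequality on $\Omega$ with conjugate exponents $\frac{p}{2}$ and $\frac{p}{p-2}$, combined with the global $L^{p}$ bound of Corollary (\ref{7.11}) in the normalized case, followed by the same algebraic rearrangement. No meaningful difference from the paper's proof.
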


\begin{proof}
	Let  $p\in (2,\infty)$. Then,  according to the H\"{o}lder inequality, we obtain
	\begin{align*}
		 & \!\!\!\!\!\!\!\!\!\!\!\!\!\!\!\!\!\!\!\!  \int\int_{\Omega}\left\vert  \Phi_{\psi}^{D,M}f\left(  t,x\right)
		\right\vert ^{2}|x|^{2\mu +1}dx \frac{dt}{t}\\
		&  \leq\left(  \int\int_{\Omega}\left\vert  \Phi_{\psi}^{D,M}f\left(
		t,x\right)  \right\vert ^{p}|x|^{2\mu +1}dx  \frac{dt}{t}\right)
		^{\frac{2}{p}}\left(  \int\int_{\Omega}|x|^{2\mu +1}dx \frac{dt}%
		{t}\right)  ^{\frac{p-2}{p}}\\
		&  \leq |\Omega|^{\frac{p-2}{p}}\left(  \int
		_{0}^{\infty}\int_{\mathbb{R}}\left\vert \Phi_{\psi}^{D,M}f\left(  t,x\right)  \right\vert ^{p}|x|^{2\mu +1}dx  \frac
		{dt}{t}\right)  ^{\frac{2}{p}}.
	\end{align*}
	Now, from the relation (\ref{7.11}), we obtain
	\[
	1-\varepsilon\leq\int\int_{\Omega}\left\vert \Phi_{\psi}^{D,M}f\left(  t,x\right)  \right\vert ^{2}|x|^{2\mu +1}dx \frac
	{dt}{t}\leq |\Omega|^{\frac{p-2}{p}}\left(
	\left(  C_{\psi}^{ M }\right)  ^{\frac{1}{p}}\left\Vert \psi
	\right\Vert_{\mu,2}^{\frac{p-2}{p}}\left\Vert f\right\Vert_{\mu,2}\right)  ^{2}.
	\]
	Therefore
	\[
	\forall\varepsilon\in\left(0,1\right),\quad 0<1-\varepsilon\leq\left(
	C_{\psi}^{ M }\right)  ^{\frac{2}{p}}|\Omega|^{\frac{p-2}{p}}%
	\]
	and
	\[
\forall\varepsilon\in\left(0,1\right),\quad \left(  1-\varepsilon\right)
	^{\frac{p}{p-2}}\left(  C_{\psi}^{ M }\right)  ^{\frac{2}{2-p}}\leq
	|\Omega|.
	\]
\end{proof}

\section*{Summary}
We have established the generalized translation operator and the generalized convolution production in the linear canonical Dunkl domains which allowed us to construct a new type of generalized linear canonical wavelet transform. We have  investigated the fundamentals properties for this new generalized wavelet transform such as orthogonality relation, Plancherel type formula, reconstruction formula and reproducing kernel. In the end,  we have  investigated  some uncertainty inequalities for the desired wavelet transform as applications.

\section*{Conclusion}

In this paper, we presented a novel concept of translation operator and convolution product in the context of the linear canonical Dunkl transform which generate a wide class of integral transforms, which facilitate for further research works in the following extent. Also, we have accomplished the linear canonical Dunkl wavelet transform and have  investigated its fundamentals properties and some uncertainty inequalities, which facilitate for further research works for a general wide class fo wavelet transforms.
In our future works we shall study the localization operators for this new wavelet transform and and investigate  others uncertainty inequalities.

\section*{Declarations}

\textbf{Conflict of interest} The  author confirms that there is no conflict of interest.

\bibliographystyle{abbrv}
\bibliography{LCDWT}


\end{document}